\pgfplotsset{compat=1.13}
\tikzset{external/only named=true}
    \NewDocumentCommand\unit{O{}m}{\si[#1]{#2}}
\definecolor{darkblue2}{rgb}{0,0,.65}
\title{A weighted hybridizable discontinuous Galerkin method for drift-diffusion
problems \thanks{This work was partially supported by funds from GNCS-INdAM
``Professori Visitatori Bando 2020'', project ``A comparison between finite
volume and hybridizable discontinuous Galerkin methods for the simulation of
micro- and nano-electronic devices'' as well as the Leibniz competition 2020.} }
\author{Wenyu Lei$^1$        \and
        Stefano Piani$^2$    \and
        Patricio Farrell$^3$ \and
        Nella Rotundo$^4$    \and
        Luca Heltai$^2$      
}
\date{
$^1$  University of Electronic Science and Technology of China\\
	No.2006, Xiyuan Ave, West Hi-Tech Zone\\
	611731 Chengdu, China \\ \texttt{wenyu.lei@uestc.edu.cn}\\
$^2$  SISSA -- International School for Advance Studies\\
via Bonomea 265, 34136 Trieste, Italy\\
              \texttt{\{stefano.piani, luca.heltai\}@sissa.it} \\
$^3$  Weierstrass Institute Berlin\\Mohrenstr. 39, 10117 Berlin, Germany\\
              \texttt{patricio.farrell@wias-berlin.de} \\
$^4$   DIMAI -- University of Florence\\Viale Morgagni 67/A, 50134 Florence\\
              \texttt{nella.rotundo@unifi.it}\\[2ex]
              \today           
}
\begin{document}

\maketitle

\begin{abstract}
In this work we propose a weighted hybridizable discontinuous Galerkin method
(W-HDG) for drift-diffusion problems. By using specific exponential weights when
computing the $L^2$ product in each cell of the discretization, we are able to
mimic the behavior of the Slotboom  variables, and eliminate the
drift term from the local matrix contributions, while still solving the problem
for the primal variables. We show that the proposed
numerical scheme is well-posed, and validate numerically that it has the same
properties as classical HDG methods, including optimal convergence, and
superconvergence of postprocessed solutions. For polynomial degree zero,
dimension one, and vanishing HDG stabilization parameter, W-HDG coincides with
the Scharfetter-Gummel finite volume scheme (i.e., it produces the
same system matrix). 
The use of local exponential weights generalizes the
Scharfetter-Gummel scheme (the state-of-the-art for finite volume
discretization of transport dominated problems) to arbitrary high order
approximations.

\paragraph*{Keywords:~} Finite element methods; discontinuous Galerkin methods;
hybrid methods \and weighted norms; exponential fitting methods;
convection-diffusion equations; drift-diffusion problems
% % \PACS{PACS code1 \and PACS code2 \and more}
% \subclass{MSC 65N30 \and 65N12}
\end{abstract}

\section{Introduction}
Drift-diffusion problems are ubiquitous in nature, and describe processes in
which transport and diffusion play an equally important role. A notable example
is the van Roosbroeck system for the simulation of the electric potential and
charge carrier densities in semiconductor devices \cite{Mock1972}, where these
systems are used to model the charge transport of electron or hole densities in
a self-consistent electrical field.

Such systems are especially difficult to solve when advection dominates
diffusion \cite{Morton1996}, leading to stiff boundary layers that need to be
resolved appropriately by numerical discretization schemes. Moreover,
positivity preserving schemes are important when some of the unknown variables
represent a density, as it is the case for the van Roosbroeck system.

A standard technique for the simulation of drift-diffusion problems is the two-point
finite volume (TPFV) method in combination with a Scharfetter--Gummel flux discretization
\cite{Scharfetter1969} --- a finite volume variant of the
Il'in--Allen--Southwell scheme \cite{roos:stynes:tobiska}.
It has become a key tool
for the simulation of semiconductor devices \cite{Farrell2017, Scharfetter1969, Bank1987, Gaertner2015,Chainais-Hillairet2015, Ten-Thije-Boonkkamp1993, Abdel2021}, thanks to its positivity preserving
property (cf. \cite{bessemoulin2012finite}), and to the availability of excellent mesh generators for complicated
2D \cite{triangle} and 3D \cite{si:2015} domains, which allow to construct
unstructured Delaunay--Voronoi meshes (though anisotropic meshes in 3D remain a
challenge), capable of capturing boundary layers and other important features of
the simulation.

% To handle boundary layers, 
% must yield 

% It is worth pointing out that there
% are also finite element based simulation tools which take into account the
% multiscale nature of optoelectronic devices \cite{Maur2008TiberCAD}.

One of the biggest drawbacks of this two-point finite volume method is its inherent low
order nature. An interesting attempt to construct uniformly convergent second
order TPFV schemes for applications to semiconductor devices and plasma
physics was undertaken by the group of ten Thije Boonkkamp. In a series of
papers \cite{hbm:98,boonkkamp:05,Thije-Boonkkamp2011,Liu2013}, he and his
coworkers have considerably extended a uniformly convergent second order finite
difference scheme originally used by G.~D.~Thiart \cite{thiart:90}. In
\cite{boonkkamp:05}, this approach was called {\em complete flux scheme} (CFS),
owing to the fact that CFS has an additional source term contribution in the
Scharfetter--Gummel flux of the differential operator.

This scheme was later extended to nonuniform meshes in \cite{Farrell2017b},
where also uniform second-order convergence was proven. However, it is not clear
how to extend these ideas to produce finite volume schemes with convergence
rates higher than two. Finite volumes, even when using Scharfetter--Gummel
stabilization or the complete flux scheme, remain inherently low order methods.

Discontinuous Galerkin (DG) methods \cite{ArnoldBrezziCockburnMarini-2001-a} are
often thought to be a (possibly higher order) alternative to finite volume
approximations, but are rarely adopted outside of the academic community. The
combination of positivity preserving and the Scharfetter--Gummel stabilization
makes the finite volume approximation very robust, and the \emph{go-to} solution
for many drift-diffusion approximations.
DG methods have also been criticized due to their relatively high number of globally coupled degrees
of freedom. The hybridizable discontinuous Galerkin (HDG) alleviates this problem by 
discretizing the exact solution written in terms of many local problems as well as one single 
global problem. 

Within the finite volume community, hybrid high-order (HHO) 
methods have been studied 
for general advection-diffusion equations \cite{Di-Pietro2015}. It turns out that HHO can be interpreted as a super-convergent HDG method. Moreover, the lowest order HHO methods are 
known as hybrid finite volume (HFV) schemes \cite{Eymard2009, DRONIOU2010}, which 
aim at overcoming the limitations of classical two-point flux schemes. In 
\cite{Chainais-Hillairet2022}, three different types of HFV were studied, one of which 
is nonlinear in nature and is shown to preserve positivity.

% \textcolor{blue}{PF: We should may be make the connection from DG to HDG. And then point out what I sent some time ago via email:
% %
% \url{https://epubs.siam.org/doi/epdf/10.1137/140993971}
% %
% Hybrid high-order (HHO) methods seem to be related to HDG methods. The lowest order of the HHO methods is the hybrid finite volume (HFV) method which were studied in 
% %
% \url{https://link.springer.com/article/10.1007/s00211-022-01289-w}
% %
% We should point out the connections between HDG and HFV/HHO and cite at least both papers above. We could do this once we receive the revisions. In the second paper, there is also an overview in the introduction.
% }

In this paper, we propose a weighted hybrid discontinuous Galerkin (W-HDG) method
inspired by the construction of the complete flux scheme. In the finite volume
context, CFS leads naturally to the Scharfetter--Gummel stabilization with some
additional terms on the right hand side. When the same idea is applied to HDG
methods, one obtains a scheme that resembles exponential fitting methods
\cite{Brezzi1989,brezzi1989numerical,lazarov2012exponential}, with some
important differences: in W-HDG, we discretize directly the primal variables, and thanks to a careful choice of weights used when computing inner products we are able to eliminate from the system the advection term, mimicking what happens when changing to Slotboom variables~\cite{Slotboom1973}.

Similar to HHO and HFV methods, W-HDG methods can be interpreted as a high order alternative to
Scharfetter--Gummel two-point finite volume discretizations. In Section~\ref{sec:problem}
we introduce the problem, and the basic principles behind W-HDG methods. Section
\ref{sec:prelim} presents some general notations common to all HDG
discretizations, while Section~\ref{sec:whdg} discusses stability, continuity,
and convergence properties of the W-HDG method. We present some numerical
examples and draw some conclusions in Sections~\ref{sec:numerical}
and~\ref{sec:conclusion}.

\section{Description of the problem}\label{sec:problem}

Given a function $f\in L^2(\Omega)$  defined on a bounded convex domain
$\Omega\subset\mathbb{R}^d$, boundary data $g_D$ and $g_N$ respectively defined
on $\Gamma_D$ and $\Gamma_N$, where $\Gamma_D \cup \Gamma_N = \Gamma := \partial
    \Omega$, and $\Gamma_D \cap \Gamma_N = \emptyset$, we want to find the pair
$(u,\bj)$ satisfying
\begin{subequations}\label{eq:problem}
    \begin{align}
        \bj  +\alpha \GRAD u - \bbeta u & = 0,\quad    &  & \text{ in } \Omega, \label{eq:flux}  \\
        \DIV \bj                        & = f, \quad   &  & \text{ in } \Omega, \label{eq:div}   \\
        u                               & = g_D, \quad &  & \text{ on } \Gamma_D, \label{eq:dbc} \\
        \bj\cdot \bn                    & = g_N, \quad &  & \text{ on } \Gamma_N . \label{eq:bc}
    \end{align}
\end{subequations}
Here $\bn$ denotes the outward-pointing normal vector of $\partial\Omega$,
$\alpha$ is assumed to be a positive constant, and the vector field $\bbeta$ is
assumed to be in $[L^\infty(\Omega)]^d$.

When $\bj$ is kept as an independent variable as in system~\eqref{eq:problem},
the problem is in its mixed form (see, e.g., \cite{brezzi2012mixed}). By
substituting $\bj$ in equation~\eqref{eq:div} with its definition in
equation~\eqref{eq:flux} one obtains the classical primal formulation of the
drift diffusion problem in divergence form.

If in addition the vector field $\bbeta$ is the gradient of a potential $\psi$,
i.e., $\bbeta = -\nabla \psi$, one can rewrite both the primal and mixed
formulation of the system using the Slotboom change of variable~\cite{Slotboom1973}
\begin{equation}
    \label{eq:slotboom-relation}
    s = u \exp\left(\frac{\psi}{\alpha}\right),
\end{equation}
obtaining from \eqref{eq:problem} the following variant:
\begin{subequations}
    \label{eq:problem-slotboom}
    \begin{align}
        \bj  + \alpha \exp
        \left(-\frac{\psi}{\alpha}\right)\nabla s & = 0,\quad                                         &  & \text{ in } \Omega, \label{eq:flux-slotboom}  \\
        \DIV \bj                                  & = f, \quad                                        &  & \text{ in } \Omega, \label{eq:div-slotboom}   \\
        s                                         & = g_D \exp\left(\frac{\psi}{\alpha}\right), \quad &  & \text{ on } \Gamma_D, \label{eq:dbc-slotboom} \\
        \bj\cdot \bn                              & = g_N, \quad                                      &  & \text{ on } \Gamma_N . \label{eq:bc-slotboom}
    \end{align}
\end{subequations}

In system~\eqref{eq:problem} the fluxes are split in drift and diffusive
contributions, depending, respectively, on $\bbeta$ and $\alpha$. Rewriting
system~\eqref{eq:problem} using the set of variables $(s,\bj)$ in place of $(u,
    \bj)$, has the effect of transforming the problem into a purely diffusive one,
with a highly nonlinear coefficient $\alpha\exp\left(-\frac{\psi}{\alpha}\right)$. 
Due to this fact, we will not use system~\eqref{eq:problem-slotboom} in the discretization (as done by other authors, e.g.,~\cite{brezzi1989numerical}).

The analysis of such problems flourished in the context of semiconductor
devices, where a combination of nonlinear systems with a similar nature of
system~\eqref{eq:problem} describes the behavior of the electrostatic potential
and charge transport. The most famous version of such problems -- the van
Roosbroeck system -- involves three different variables: the electrostatic
potential (playing the role of $\psi$), the electron density, and the hole
density (playing the role of $u$). 

The rewriting of the van Roosbroeck system as a purely diffusive problem using
the Slotboom change of variable ~\eqref{eq:slotboom-relation} enabled the
first analytical result with zero right-hand side in semiconductor theory
\cite{Mock1972}, and was also used more generally in \cite{Markowich1986}. From
the numerical point of view this set of variables was used
to generalize the exponential fitting method to two-dimensional problems \cite{Brezzi1989}.

The Slotboom formulation has mostly analytical advantages, since it allows to
write the system in terms of quasi-linear partial differential equations, allowing 
to apply classical analytical results that can be found for example in
\cite{Taylor2011} which can be used in the context of semiconductors coupled with
circuits in \cite{Al2003} and \cite{Al2010}.

One of the main difficulties in using directly the Slotboom formulation is the
fact that the nonlinear diffusion coefficients remains treatable only when the
ratio $\frac{\psi}{\alpha}$ is of order one. In all other cases it rapidly blows
up or goes to zero, rendering the numerical solution of the system difficult to
deal with (see, e.g., \cite{Brezzi1989}). 

The idea behind our weighted HDG formulation is to introduce a local artificial potential $\psi_K$ such that $\nabla \psi_K = \bbeta$ on each cell, and such that $\alpha\exp(\psi_K/\alpha)$ is locally of order one, to eliminate
the drift term from the system \emph{element-wise}, as in
equation~\eqref{eq:flux-slotboom}.

In order to have a mesh independent solution, we do not use directly the
Slotboom variable $s$, but instead exploit weighted integrals to obtain the same
effect of the Slotboom transformation, and we maintain the primal variable $u$ as our
system unknown.

Such formulation is possible by defining a locally shifted version of $\psi$
which guarantees that $\exp(\frac{\psi}{\alpha})$ is of order one in every element of
the discretization, and using it as a weight in
the integrals that define the local HDG systems (see Section~\ref{sec:whdg} for
the details).

\section{Notation}\label{sec:prelim}
In this section, we introduce some notations used to define our numerical scheme
for the solution of problem \eqref{eq:problem}.

\subsection{Subdivisions} Let us assume that the domain $\Omega\subset\Real^d$
is a polytope. Let $\cT:=\cT(\Omega)$ be a subdivision of $\Omega$ made of
simplices  or quadrilaterals (in two dimensions) or hexahedrons (in three dimensions); we call the elements of $\cT$ ``cells'' and we call the intersection
of two adjacent cells (or the intersection between a cell and $\partial \Omega$)
with positive
$(d-1)$-measure a ``face''. We denote with $\cE^\circ$ and $\cE^\partial$ the collection of
these two kinds of faces and set $\cE := \cE^\circ \cup \cE^\partial$. In what
follows, we assume that $\cT$ is shape-regular in the sense of
\cite{ern2004theory} for simplicial meshes, \ie there exists a positive constant $c_1$ so that for
each cell $K\in \cT$,
\[
    h_K \le c_1 \rho_K,
\]
where $h_K$ is the size of $K$ and $\rho_K$ is the diameter of the largest ball inscribed in $K$, and we refer to~
\cite{ciarlet2002finite} for quadrilateral and hexahedral meshes.
% We further assume that the subdivision $\cT$ is quasi-uniform, \ie there exists a positive constant $c_2$ independent of $\cT$ so that
% \[
%     \max_{K\in \cT} h_K \le c_2 \min_{K \in \cT} h_K .
% \]

\subsection{Approximation spaces and jump operators} For $K\in\cT$, we define
$\cP_k(K)$ and $\mathcal Q_k(K)$ to be the spaces of polynomials defined on $K$ with degree at most
$k$ globally, or in each coordinate direction respectively. The vector spaces $\bcQ(K) = [\cP_k(K)]^d$ or $\bcQ(K) = [\mathcal{Q}_k(K)]^d$ are used to approximate the flux
$\bj$, while the spaces $\cW(K) := \cP_k(K)$ or $\cW(K) := \mathcal{Q}_k(K)$ are used to approximate the scalar solution $u$
in each element $K$ (for simplices and quadrilateral or hexahedral meshes respectively). The corresponding global
finite element spaces $\bcQ(\cT)$ and $\cW(\cT)$ are
respectively
\[
    \bcQ(\cT) := \{\bQ \in [L^2(\Omega)]^d: \bQ|_K \in \bcQ(K)\}
\]
and
\[
    \cW(\cT) := \{W \in L^2(\Omega): W|_K \in \cW(K)\} .
\]
We note that the discontinuous functions $\bQ\in \bcQ(\cT)$ and $w\in \cW(K)$
are double valued when they are restricted to each interior face $e\in
    \cE^\circ$. For two cells $K^+, K^-$ such that $e = K^+\cap K^- \neq \emptyset$, we define the
branches $W_{K^\pm}(\bx):=\lim_{\epsilon\to 0} W(\bx-\epsilon \bn_{K^{\pm}})$
for all $\bx\in e$, where $\bn_{K^\pm}$ denotes the outward normal vectors of
$K^\pm$. The notation for the vector field $\bQ_{K^{\pm}}$ on $e$ is similar. We also
define the jump on $e\in \cE^\circ$,
\[
    \jump{W}_e := W_{K^-}\bn_{K^-} + W_{K^+}\bn_{K^+}
    \quad\text{and}\quad
    \jump{\bQ}_e := \bQ_{K^-}\cdot\bn_{K^-} + \bQ_{K^+}\cdot\bn_{K^+}.
\]
By convention, for $e\in\cE^\partial\cap K \neq \emptyset$ for some cell $K$, we define
$\jump{W}_e = W_{K}\bn_{K}$ and $\jump{\bQ}_e = \bQ_{K}\cdot\bn_{K}$.

For each face $e\in \cE$, we similarly define $\cM(e) = \cP_k(e)$ or $\cM(e) = \mathcal{Q}_k(e)$ the polynomial spaces on $e$ (for simplicial or quadrilateral faces respectively) and
\[
    \cM(\partial K) := \{ \Lambda \in L^2(\cE) : \Lambda|_e \in \cM(e) \text{ for each face} \text{ of } K \}.
\]
The definition of the space $\cM(\cE)$ follows the same idea. Globally, we also define
\begin{equation}\label{eq:space-trace}
    \begin{aligned}
    \cM(G_D;\cE) :=   \{ \Lambda &\in L^2(\cE) :
    \Lambda|_e \in \cM(e) \\
    &\text{ for each } e\in \cE\backslash \Gamma_D
    \text{ and }
    \Lambda|_e = G_D, \text{ for each } e\in \cE\cap \Gamma_D
    \},
    \end{aligned}
\end{equation}
where $G_D\in \cM(\Gamma_D)$ is an approximation of $g_D$ on $\Gamma_D$. In particular, we use the space $\cM(0;\cE)$ for $\cM(G_D;\cE)$ if $G_D = 0$.
% \PF{$\cM_0(\cE)$ could also be read as $D=0$...}\WL{How about using $\cM(0;\cE)$?}
% \subsection{Inner products and norms}
For $K\in\cT$, $e\in \cE_K:=\cE\cap \partial K \neq \emptyset$, and for a positive weight
function $\mu_K$ defined on $K$, we set
\[
    (u,v)_{K,\mu_K}:=\int_K \mu_K uv\diff \bx
    \quad\text{and}\quad
    \langle u,v\rangle_{e ,\mu_K} :=  \int_e \mu_K uv \diff s_\bx ,
\]
and define the inner product $\|.\|_{K,\mu_K} = \sqrt{(.,.)_{K,\mu_K}}$. We also set
\[
    \langle u,v\rangle_{\partial K,\mu_K} := \sum_{e\in \cE_K} \int_e \mu_K uv \diff s_\bx .
\]
Globally, if $\mu$ is defined as a cell-wise function so that $\mu|_{K} = \mu_K$, we introduce the weighted inner-product
\[
    (u,v)_{\cT,\mu} := \sum_{K\in\cT} (u,v)_{K,\mu}.
\]
We define the norm $\|.\|_{\mu} : = \sqrt{(.,.)_{\cT,\mu}}$. If the weight function
$\mu \equiv 1$ in $\Omega$, we omit the subscripts $\mu$ or $\mu_K$ and  we define in a similar way  $\langle.,.\rangle_{\cE}$ and $\langle.,.\rangle_{\Gamma_N}$.

\section{Weighted HDG methods}\label{sec:whdg}

In the section, we introduce our HDG method in the spirit of
\cite{cockburn2009unified} assuming that $\alpha$ is a positive constant, and
that $\bbeta$ is a piecewise constant function subordinate to a subdivision
$\cT$ of $\Omega$. Then there exists a function $\psi \in W^{1,\infty}(\cT)$
such that
\begin{equation}
    \label{eq:definition-v}
    \bbeta = -\GRAD \psi \qquad \text{ locally on each } K.
\end{equation}

Notice that, in general, the function $\psi$ is not continuous across elements of
$\cT$.

\subsection{Discrete formulation of the local problem}
Let us start by considering problem \eqref{eq:problem} in a cell $K\in \cT$. We
multiply equation \eqref{eq:flux} with a weight function
\begin{equation}\label{eq:weight-v}
    \mu_K(\bx) := \exp\bigg(\frac{\psi(\bx)}{\alpha} \bigg)
\end{equation}
satisfying that $\GRAD \mu_K = \tfrac{\GRAD \psi}\alpha \mu_K = -\tfrac{\bbeta}{\alpha}\mu_K$. Whence,
\begin{equation}\label{eq:flux-weighted}
    \mu_K \bj + \alpha \GRAD (\mu_K u) = 0,\quad \text{ in } K
\end{equation}

\begin{remark}\label{rem:weight} We note that the above equation would make
    sense globally (i.e., on the entire $\Omega$) when $\psi\in
        W^{1,\infty}(\Omega)$ and not just on each element separately. This
    assumption is satisfied, for example, by the Poisson equation for the
    electrostatic potential $\psi$ in the van Roosbroeck system. According to
    \cite[Theorem~3.2.1]{Markowich1986}, we can assume that the right hand side of
    the Poisson problem is in $L^p(\Omega)$ if the doping concentration is
    assumed to be an $L^p(\Omega)$ function with $p>d$. Whence by elliptic
    regularity $\psi\in W^{2,p}(\Omega)$ (cf.
    \cite[Theorem~4.3.2.2]{grisvard2011elliptic}) and we guarantee that $\GRAD
        \psi$ is Lipschitz by the Sobolev embedding theorem.
\end{remark}

Next, we test \eqref{eq:flux-weighted} with a vector-valued function $\bQ\in \bcQ(K)$ and multiply \eqref{eq:div} with $\mu_K W$ where $W\in \cW(K)$. Using integration by parts for the term containing $\GRAD (\mu_K u)$ and $\DIV\bj$, our discrete weak formulation becomes: given a local Dirichlet boundary condition $\widehat U\in \cM(\partial K)$ and $f\in L^2(K)$, find $(\bJ,U)\in \bcQ(K)\times \cW(K)$ satisfying that for all $\bQ\in \bcQ(K)$ and $W\in \cW(K)$,
\begin{subequations}\label{eq:discrete-local}
    \begin{align}
        (\bJ, \bQ)_{K,\mu_K} - (\alpha U, \DIV\bQ)_{K,\mu_K}
        + \langle \alpha \widehat U, \bQ\cdot\bn \rangle_{\partial K,\mu_K}                                           & =0,  \label{eq:discrete-flux} \\
        -(\bJ, \GRAD W-\tfrac{\bbeta}\alpha W)_{K,\mu_K} + \langle \widehat{\bJ}\cdot\bn, W\rangle_{\partial K,\mu_K} & = (f,W)_{K,\mu_K} .\label{eq:discrete-div}
    \end{align}
\end{subequations}
Here the numerical flux $\widehat{\bJ}$ is given by
\begin{equation}\label{eq:ldg-flux}
    \widehat\bJ = \bJ + \tau_K (U - \widehat U) \bn,
\end{equation}
with $\tau_K\ge 0$ denoting a piecewise constant function (independent of $h_K$) defined on $\partial K$. We denote the discrete solution with capital letters.

\begin{remark}[Constant $\bbeta$]\label{rem:rescaling}
    In particular, if $\bbeta = -\GRAD \psi$ is a constant vector, we shall use the following weight function
    \begin{equation}\label{eq:weight}
        \mu_K := \exp\bigg(-\frac{\bbeta\cdot(\bx-\bx_K)}{\alpha}\bigg) ,
    \end{equation}
    where $\{\bx_K\}_{K\in\cT}$ is a set of points in $\Rd$. As a consequence,
    the local problem \eqref{eq:discrete-local} is rescaled by the factor
    $\exp(\bbeta\cdot\bx_K/\alpha)$
    % \PF{Not sure I follow... the $\bf{x}$ isn't there anymore on purpose?}\WL{The weight functions \eqref{eq:weight} and $\exp(-\bbeta\cdot\bx/\alpha)$ give the same local problem in \eqref{eq:discrete-local}. I think there is a typo for this factor. The minus sign should be removed.}. 
    We will use the weight function
    \eqref{eq:weight} both for the analysis and for computational purposes by
    choosing different sets of $\{\bx_K\}$; see Section~\ref{subsec:1d} and
    Section~\ref{sec:numerical} for some particular choices.
\end{remark}

The following proposition shows that the local problem admits a unique solution
if the stabilization parameter is strictly positive on at least one face of $K$.

\begin{proposition}[Well-posedness of the local
        problem]\label{prop:local-solvable} The local problem
    \eqref{eq:discrete-local} is uniquely solvable if $\tau_K\ge 0$ and it is
    strictly positive on at least one face of $K$ for simplicial meshes, and it is uniquely solvable if $\tau_K > 0$ on each face of $K$ for quadrilateral or hexahedral meshes.
\end{proposition}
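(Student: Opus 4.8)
The plan is to prove well-posedness of the finite-dimensional square system by showing that the only solution of the homogeneous problem (with $\widehat U = 0$ and $f = 0$) is the trivial one. Since \eqref{eq:discrete-local} is a linear system on the finite-dimensional space $\bcQ(K)\times\cW(K)$ with as many equations as unknowns, injectivity of the associated map immediately yields existence and uniqueness, so the entire task reduces to an energy argument.

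First I would set $\widehat U = 0$ and $f = 0$, and then test \eqref{eq:discrete-flux} with $\bQ = \bJ$ and \eqref{eq:discrete-div} with $W = \alpha U$. This gives
\begin{align*}
    (\bJ,\bJ)_{K,\mu_K} - (\alpha U,\DIV\bJ)_{K,\mu_K} &= 0, \\
    -\alpha(\bJ,\GRAD U - \tfrac{\bbeta}{\alpha}U)_{K,\mu_K} + \alpha\langle \widehat{\bJ}\cdot\bn, U\rangle_{\partial K,\mu_K} &= 0.
\end{align*}
The key observation is that, because $\GRAD\mu_K = -\tfrac{\bbeta}{\alpha}\mu_K$, the weighted integration by parts identity reads $(\alpha U,\DIV\bJ)_{K,\mu_K} = -\alpha(\bJ,\GRAD U - \tfrac{\bbeta}{\alpha}U)_{K,\mu_K} + \alpha\langle U,\bJ\cdot\bn\rangle_{\partial K,\mu_K}$, so the two volume cross-terms cancel exactly when the equations are added. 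What survives is
\[
    \|\bJ\|_{K,\mu_K}^2 + \alpha\langle \widehat{\bJ}\cdot\bn - \bJ\cdot\bn,\, U\rangle_{\partial K,\mu_K} = 0.
\]
Substituting the numerical flux definition \eqref{eq:ldg-flux} with $\widehat U = 0$, namely $\widehat{\bJ}\cdot\bn - \bJ\cdot\bn = \tau_K U$, converts the boundary term into a manifestly nonnegative quadratic contribution, yielding the energy identity
\[
    \|\bJ\|_{K,\mu_K}^2 + \alpha\sum_{e\in\cE_K}\langle \tau_K U, U\rangle_{e,\mu_K} = 0.
\]
Since $\mu_K > 0$, $\alpha > 0$, and $\tau_K \ge 0$, both terms are nonnegative, forcing $\bJ \equiv 0$ on $K$ and $U = 0$ on every face $e$ where $\tau_K > 0$.

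The remaining step is to upgrade $U = 0$ on part of the boundary to $U \equiv 0$ in the interior of $K$, and this is where the simplicial and tensor-product cases diverge, so it is the main obstacle. With $\bJ = 0$ already known, equation \eqref{eq:discrete-flux} reduces to $(\alpha U,\DIV\bQ)_{K,\mu_K} = \langle \alpha\widehat U,\bQ\cdot\bn\rangle_{\partial K,\mu_K} = 0$ for all $\bQ$; integrating by parts this says $(\alpha\GRAD U - \bbeta U,\bQ)_{K,\mu_K} = \langle \alpha U,\bQ\cdot\bn\rangle_{\partial K,\mu_K}$, and choosing $\bQ$ interior (vanishing normal trace) forces $\alpha\GRAD U = \bbeta U$ on $K$, i.e. $U$ is a polynomial satisfying a first-order gradient constraint. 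Combined with $U$ vanishing on at least one face, one concludes $U \equiv 0$; for simplices a single face suffices because $\GRAD(\mu_K U) = 0$ means $\mu_K U$ is constant and hence zero once it vanishes on a face of positive measure, whereas on quadrilaterals or hexahedra the anisotropic structure of $\mathcal{Q}_k$ requires the trace to vanish on \emph{every} face to rule out nonzero products of coordinate-direction polynomials, which is precisely the stronger hypothesis $\tau_K > 0$ everywhere stated in the proposition. I would therefore carry out the argument carefully separating these two geometric settings, with the dimension-counting/trace argument for the tensor-product case being the delicate point to get right.
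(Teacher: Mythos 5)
Your first half is sound and coincides in substance with the paper's own argument: you take $\bQ=\bJ$ and $W=\alpha U$ (the paper takes $\bQ=\bJ/\alpha$, $W=U$, after first integrating the divergence equation by parts, which is the same computation up to the factor $\alpha$), the weighted identity $\GRAD\mu_K=-\tfrac{\bbeta}{\alpha}\mu_K$ makes the volume cross-terms cancel, and the numerical-flux substitution yields the energy identity forcing $\bJ=\boldsymbol 0$ and $\tau_K U=0$ on $\partial K$. The problem is the second half, in the simplicial case. From $(\alpha U,\DIV\bQ)_{K,\mu_K}=0$ you integrate by parts and test against fields $\bQ\in\bcQ(K)$ with vanishing normal trace, claiming this forces $\alpha\GRAD U=\bbeta U$ on $K$. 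That deduction is invalid: the residual $\alpha\GRAD U-\bbeta U$ lies in $\bcQ(K)$ but generally \emph{not} in the subspace of normal-trace-free fields, so orthogonality to that subspace proves nothing; worse, the subspace can be trivial — for $k=1$ on a triangle, a field in $[\cP_1(K)]^2$ whose normal component vanishes on all three edges must satisfy six independent linear conditions on six coefficients and is generically zero, so your test space is $\{0\}$ and the step yields no information at all. The underlying obstruction is that with $\tau_K>0$ on only one face you know $U=0$ only on that face, so the boundary terms on the remaining faces cannot be discarded, and the conclusion $\GRAD(\mu_K U)=0$ is simply not available by this route.

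The paper closes this case by a different mechanism, which you would need to adopt (or replace by something equivalent): since $U=0$ on the face $e$ where $\tau_K>0$, write $U=l_e\,p_{k-1}$ with $l_e$ the barycentric coordinate vanishing on $e$ and $p_{k-1}\in\cP_{k-1}(K)$; then use surjectivity of $\DIV:[\cP_k(K)]^d\to\cP_{k-1}(K)$ to choose $\bQ$ with $\DIV\bQ=p_{k-1}$ in the flux equation (where $\bJ=\boldsymbol 0$ and $\widehat U=0$), obtaining
\begin{equation*}
  0=(\alpha U,p_{k-1})_{K,\mu_K}=\int_K \alpha\,\mu_K\, l_e\, p_{k-1}^2\,\diff\bx ,
\end{equation*}
and conclude $p_{k-1}=0$ because $l_e\ge 0$ on $K$ and $\mu_K>0$ — no integration by parts, no interior test fields. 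Your tensor-product case, by contrast, is essentially recoverable and matches the paper once you drop the restriction to interior $\bQ$: there $\tau_K>0$ on every face gives $U=0$ on all of $\partial K$, so the boundary term in your integrated-by-parts identity vanishes for \emph{every} $\bQ\in\bcQ(K)$, and you may take $\bQ=\GRAD U-\tfrac{\bbeta}{\alpha}U$, which does lie in $[\mathcal{Q}_k(K)]^d$ for $\bbeta$ constant on $K$, to get $\GRAD(\mu_K U)=0$ and hence $U\equiv 0$ from the vanishing trace. So the quadrilateral/hexahedral sketch needs only that rephrasing, but the simplicial single-face case as written has a genuine gap.
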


\begin{proof}
    This proof follows closely
    \cite[Proposition~3.2]{cockburn2009unified}. We report it here for completeness. It is sufficient to show that when $\widehat U= 0$ on
    $\partial K$ and $f=0$ in $K$, both $\bJ$ and $U$ vanish. To this
    end, we first apply integration by parts for the first term on the left hand
    side of \eqref{eq:discrete-div} and insert \eqref{eq:ldg-flux}. Whence,
    \begin{equation}\label{eq:discrete-div-2}
        (\DIV \bJ, W)_{K,\mu_K} + \langle \tau_K(U-\widehat U), W \rangle_{\partial K,\mu_K} = 0 .
    \end{equation}
    We choose $\bQ=\bJ/\alpha$ in \eqref{eq:discrete-flux} and $W=U$ in
    \eqref{eq:discrete-div-2} and sum up these two equations to get
    \begin{equation}\label{eq:mid-step-prop-1}
        \frac{1}{\alpha}(\bJ,\bJ)_{K,\mu_K} + \langle \tau_K U, U\rangle_{\partial K,\mu_K} = 0.
    \end{equation}
    Let us consider simplicial meshes first. Since $\alpha>0$ and $\tau_K \ge 0$, we immediately get $\bJ = \bf{0}$ and
    $\sqrt{\tau_K}U = 0$ on $\partial K$, which implies $\tau_K U=0$ on $\partial K$. 
    % \PF{I do not understand the $\Lambda$ subscript...}\WL{$\Lambda$ is removed.} 
    By assumption,
    $\tau_K$ is strictly positive at some face $e$ of $K$, and therefore $U = 0$ on $e$.
    If the polynomial degree $k=0$, we get $U = 0$ and the proof is complete.
    Otherwise, if $k>0$, using the barycentric coordinate system, we can write
    $U = l_e p_{k-1}$ with $p_{k-1}\in \cP_{k-1}(K)$ and where $l_e$ is the
    corresponding barycentric coordinate function that vanishes on $e$. Since we
    have derived that $\bJ = \bf{0}$, according to \eqref{eq:discrete-flux} with
    $\bQ$ satisfying $\DIV \bQ = p_{k-1}$ (this is because the divergence
    operator mapping from $[\cP_k(K)]^d$ to $\cP_{k-1}(K)$ is surjective), we
    have
    \[
        0 = (\alpha U, p_{k-1})_{K,\mu_K} = \int_K \alpha\mu_K l_e p_{k-1}^2 .
    \]
    So $p_{k-1} = 0$ and we conclude that $U = 0$.

    For quadrilateral or hexahedral meshes, we strengthened the assumption on
    $\tau_K$, which is required to be strictly positive on the entire boundary
    $\partial K$ of $K$. From~\eqref{eq:mid-step-prop-1} we conclude that $U=0$
    on $\partial K$, and therefore, integrating by parts
    \eqref{eq:discrete-flux} and choosing $\bQ = \nabla U - \tfrac\bbeta\alpha
    U$ we get $(\alpha \nabla U, \nabla U)_{K, \mu_K} = 0$. Since $U$ is zero on
    $\partial K$, and $\alpha$ is positive, we conclude that $U=0$ on the entire
    $K$.
\end{proof}

\begin{remark}\label{rem:local-solution-operator}
    Proposition~\ref{prop:local-solvable} implies that there exists a linear operator $T$ mapping the data pair $(\widehat U, f)$ to the solution pair $(\bJ, U)$.
\end{remark}

\subsection{Discrete global system}
A global discrete system can be obtained by observing that also at the continuous level, Problem \eqref{eq:problem} can be written in an equivalent form by combining the local systems \eqref{eq:flux-weighted} and \eqref{eq:div} in each cell $K$ and glueing them together by the following transmission condition
\begin{equation}\label{eq:tc}
    \jump{\bj}_e = 0,\qquad \forall e\in \cE^\circ .
\end{equation}
At the discrete level we proceed similarly by glueing together the local systems
in each cell $K$ and imposing the transmission condition \eqref{eq:tc} weakly on
the discrete space. To this end, for each face $e\in\cE^\circ$ and given the
numerical flux $\widehat\bJ$ defined by \eqref{eq:ldg-flux} , we set
\begin{equation}\label{eq:discrete-tc-l2}
    \langle \jump{\widehat\bJ}_e, \xi \rangle_e = 0,\quad\forall \xi \in \cM(e).
\end{equation}

% imposing \eqref{eq:tc}, \eqref{eq:dbc} and \eqref{eq:bc} weakly in the discrete space.
% The boundary conditions follow from \eqref{eq:dbc} and \eqref{eq:bc}. Here we shall similarly form our discrete global system  
% \PF{Why no subscript for jump condition (repeatedly happens in the following)? Also not for inner product?}
We similarly define the discrete counterpart for the boundary conditions: for each $e\in \cE^\partial$ and $\xi \in \cM(e)$, define $\widehat U$ on $\Gamma_D$ and $\widehat \bJ\cdot \bn$ on $\Gamma_N$ by
\begin{equation}\label{eq:discrete-bc-l2}
    \langle \widehat U, \xi \rangle_{e} = \langle g_D,\xi \rangle_e, \text{ if } e\in \Gamma_D
    \quad \text{ and }\quad
    \langle \widehat\bJ\cdot \bn, \xi \rangle_{e} = \langle g_N, \xi \rangle_{e} \text{ if } e\in \Gamma_N ,
\end{equation}
respectively. Based on the above definition, we can write $\widehat U=G_D:=\pi g_D$ with $\pi$ denoting the piecewise $L^2$ projection on $\Gamma_D(\cE)$.
% \PF{Is the sentence complete?}
We also observe that since $\jump{\widehat\bJ} \in \cM(e)$ and the formulation \eqref{eq:discrete-tc-l2} immediately implies that $\jump{\widehat\bJ} = 0$ on $\cE^\circ$. Whence $\widehat \bJ$ is single-valued on $\cE$.

Gathering \eqref{eq:discrete-tc-l2}, \eqref{eq:discrete-bc-l2} as well as the discrete local problem \eqref{eq:discrete-local}, our discrete global formulation reads: find $(\bJ, U, \widehat U)\in \bcQ(\cT)\times \cW(\cT) \times \cM(\pi g_D;\cE)$ such that for all $\bQ\in \bcQ(\cT)$, $W\in \cW(\cT)$ and $\xi \in \cM(0;\cE)$,
\begin{equation}\label{eq:discrete-full}
    \begin{aligned}
        (\bJ, \bQ)_{\cT,\mu} -(\alpha U,\DIV\bQ)_{\cT,\mu}
        + \sum_{K\in\cT} \langle \alpha \widehat U, \bQ\cdot\bn\rangle_{\partial K,\mu}                 & = 0,      \\
        (\DIV\bJ, W)_{\cT,\mu} + \sum_{K\in\cT}\langle \tau_K (U-\widehat U), W\rangle_{\partial K,\mu} & = (f,W)_{\cT,\mu},                                           \\
        \sum_{K\in\cT} \langle \bJ\cdot\bn+\tau_K(U-\widehat U), \xi \rangle_{\partial K\backslash\Gamma_N}               & = \sum_{e\in \Gamma_N}\langle  g_N, \xi \rangle_e  .
    \end{aligned}
\end{equation}

\subsection{Characterization of \texorpdfstring{$\widehat U$}{U}} According to
Remark~\ref{rem:local-solution-operator}, $\bJ$ and $U$ can be written as an
operator acting on $\widehat U$ and $f$.  So when solving the discrete system
\eqref{eq:discrete-full}, we can first eliminate the unknowns $\bJ$ and $U$ to
rewrite the system only for $\widehat U$. In this subsection, we shall derive such
system for $\widehat U$ in order to investigate the well-posedness of
\eqref{eq:discrete-full}. 
To start with, when $f$ or $g_D$ vanishes, we denote the unique triplet
\begin{equation}\label{eq:notation-sol}
    (\bJ_0, U_0, \widehat \bJ_0)\quad \text{ or }\quad(\bJ_f, U_f, \widehat \bJ_f)
\end{equation}
that solves the local problem \eqref{eq:discrete-local} for all $K\in\cT$ thanks to Remark~\ref{rem:local-solution-operator}.
% \PF{There were no triplets in the remark... Also no subscripts...}
. Whence, in view of the discrete
transmission condition \eqref{eq:discrete-tc-l2} as well as the Neumann boundary
condition defined in \eqref{eq:discrete-bc-l2}, the solution $\widehat
    U\in \cM(\pi g_D;\cE)$ satisfies that
\begin{equation}\label{eq:global-system}
    a(\widehat U, \xi) = b(\xi),\quad\forall \xi\in \cM(0;\cE),
\end{equation}
where
\[
    \begin{aligned}
        a(\widehat U, \xi) & = -\langle \jump{\widehat \bJ_0}, \xi \rangle_{\cE}                                     \\
        b(\xi)             & = \langle \jump{\widehat \bJ_f}, \xi \rangle_{\cE} - \langle g_N, \xi\rangle_{\Gamma_N}.
    \end{aligned}
\]
Here $\jump{.}$ denotes the jump the flux on $\cE$.
% \PF{Notation for inner product was previously with edge as subscript, not set of edges or $\Gamma_N$... Also (15) uses $\langle \widehat\bJ\cdot \bn, \xi \rangle_{e}$ not the transmission condition? I'm guessing this is intended but should we not introduce this notation or say that we go from local to global?}

\subsection{A decomposition of the bilinear form \texorpdfstring{$a(.,.)$}{a}}
Let $\mu$ be a positive function defined on $\Omega$ so that $\mu|_K = \mu_K$ for every $K\in \cT$. Then we decompose the bilinear form $a(.,.)$ by
\begin{equation}\label{eq:decomp}
    a(\widehat U, \xi) = \widetilde a(\widehat U,\xi)
    + E  (\widehat U, \xi),
\end{equation}
where
\[
    \widetilde a(\widehat U,\xi) =  -\langle \jump{\mu \widehat \bJ_0}, \xi \rangle_{\cE}
    \quad\text{and}\quad
    E(\widehat U, \xi) =  -\big(\langle \jump{ \widehat \bJ_0}, \xi \rangle_{\cE} -\langle \jump{\mu \widehat \bJ_0}, \xi \rangle_{\cE} \big) .
\]
% \PF{Is there maybe a sign error w.r.t. the $+$?}
The following lemma provides a characterization of the bilinear form
$\widetilde a(.,.)$ assuming zero Dirichlet boundary conditions.

\begin{lemma}[Characterization of $\widetilde a$]\label{lem:symmetric-part} The
    bilinear form $\widetilde a(.,.)$ is symmetric semidefinite on
    $\cM(0;\cE)\times\cM(0;\cE)$. Moreover, given any $\Lambda\in \cM(0;\cE)$, we denote
    $(\bJ_\Lambda, U_\Lambda)$ the corresponding cell solutions by solving the
    local problem \eqref{eq:discrete-local} with $f=0$, $\widehat U = \Lambda$ and recall \eqref{eq:notation-sol}.
    Then there holds that for $\xi \in \cM(0;\cE)$,
    \begin{equation}\label{eq:characterization}
        \widetilde a(\Lambda,\xi) = \frac{1}\alpha (\bJ_\xi,\bJ_\Lambda)_{\cT,\mu}
        + \langle \jump{\mu\tau (U_\Lambda - \Lambda)(U_\xi-\xi)}, 1\rangle_{\cE}
    \end{equation}
    where $\tau$ is a function defined on $\cE$ so that $\tau|_{\partial K} = \tau_K$.
\end{lemma}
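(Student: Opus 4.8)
The plan is to prove the characterization identity \eqref{eq:characterization} first, since symmetry and semidefiniteness then follow by inspection. The starting point is to turn the global face pairing defining $\widetilde a$ into a sum of cell-boundary integrals. Writing $(\bJ_\Lambda, U_\Lambda)$ for the local solution of \eqref{eq:discrete-local} with $f=0$ and $\widehat U = \Lambda$, and $\widehat\bJ_\Lambda = \bJ_\Lambda + \tau_K(U_\Lambda - \Lambda)\bn$ for its numerical flux (cf. \eqref{eq:ldg-flux}), I would use the single-valuedness of $\xi\in\cM(0;\cE)$ and the jump convention to obtain
\[
\widetilde a(\Lambda,\xi) = -\langle\jump{\mu\widehat\bJ_\Lambda}, \xi\rangle_{\cE} = -\sum_{K\in\cT}\langle\mu_K\widehat\bJ_\Lambda\cdot\bn, \xi\rangle_{\partial K},
\]
the Dirichlet faces dropping out because $\xi = 0$ there.

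Next I would split the trace $\xi = U_\xi + (\xi - U_\xi)$ on each $\partial K$, where $U_\xi$ is the cell solution with data $\xi$. Testing \eqref{eq:discrete-div} for the $\Lambda$-problem with $W = U_\xi$ rewrites the $U_\xi$-part as $(\bJ_\Lambda, \GRAD U_\xi - \tfrac{\bbeta}\alpha U_\xi)_{K,\mu_K}$, and substituting $\widehat\bJ_\Lambda\cdot\bn = \bJ_\Lambda\cdot\bn + \tau_K(U_\Lambda - \Lambda)$ in the remaining $(\xi - U_\xi)$-part isolates the stabilization contribution $\sum_K\langle\mu_K\tau_K(U_\Lambda - \Lambda), U_\xi - \xi\rangle_{\partial K}$, which is exactly the jump term $\langle\jump{\mu\tau(U_\Lambda - \Lambda)(U_\xi - \xi)}, 1\rangle_{\cE}$ of \eqref{eq:characterization}. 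The crucial algebraic device --- and the reason the weighted formulation works --- is the identity
\[
\GRAD(\mu_K w) = \mu_K\Big(\GRAD w - \tfrac{\bbeta}\alpha w\Big),
\]
valid because $\GRAD\mu_K = -\tfrac{\bbeta}\alpha\mu_K$ by \eqref{eq:weight-v}. This lets me integrate by parts $(\bJ_\Lambda, \GRAD U_\xi - \tfrac{\bbeta}\alpha U_\xi)_{K,\mu_K} = \int_K\bJ_\Lambda\cdot\GRAD(\mu_K U_\xi)$ with no leftover drift term, producing $-(\DIV\bJ_\Lambda, U_\xi)_{K,\mu_K} + \langle\mu_K\bJ_\Lambda\cdot\bn, U_\xi\rangle_{\partial K}$.

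Collecting terms, the $U_\xi$ boundary traces cancel and what remains is $\sum_K[(\DIV\bJ_\Lambda, U_\xi)_{K,\mu_K} - \langle\bJ_\Lambda\cdot\bn, \xi\rangle_{\partial K,\mu_K}]$. I would finish by testing the flux equation \eqref{eq:discrete-flux} of the $\xi$-problem with $\bQ = \bJ_\Lambda$, which gives precisely $\frac1\alpha(\bJ_\xi, \bJ_\Lambda)_{K,\mu_K} = (U_\xi, \DIV\bJ_\Lambda)_{K,\mu_K} - \langle\bJ_\Lambda\cdot\bn, \xi\rangle_{\partial K,\mu_K}$, identifying the first term of \eqref{eq:characterization}. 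Once \eqref{eq:characterization} holds, symmetry is immediate since its right-hand side is invariant under $\Lambda\leftrightarrow\xi$; semidefiniteness follows by setting $\xi = \Lambda$, giving
\[
\widetilde a(\Lambda, \Lambda) = \tfrac1\alpha\|\bJ_\Lambda\|_\mu^2 + \sum_{K\in\cT}\langle\mu_K\tau_K(U_\Lambda - \Lambda)^2, 1\rangle_{\partial K} \ge 0,
\]
because $\alpha > 0$, $\mu_K > 0$ and $\tau_K \ge 0$.

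I expect the main obstacle to be bookkeeping rather than anything conceptual: converting the global jump pairings into cell-boundary sums while tracking the signs carried by $\bn_{K^\pm}$ and the jump convention (and checking the boundary faces separately), and keeping the weight $\mu_K$ consistently inside every inner product so that the identity $\GRAD(\mu_K w) = \mu_K(\GRAD w - \tfrac{\bbeta}\alpha w)$ can be applied without mismatched weights. Everything else is a routine chain of integration by parts and substitutions of the two defining local equations.
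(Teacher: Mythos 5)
Your proof is correct and follows essentially the same route as the paper's: both arguments reduce to testing the flux equation \eqref{eq:discrete-flux} of the $\xi$-problem with $\bQ=\bJ_\Lambda$ and the divergence equation \eqref{eq:discrete-div} of the $\Lambda$-problem with $W=U_\xi$ (after the weighted integration by parts based on $\GRAD(\mu_K w)=\mu_K\big(\GRAD w-\tfrac{\bbeta}{\alpha}w\big)$), and then inserting the numerical flux \eqref{eq:ldg-flux}, with symmetry and semidefiniteness read off from \eqref{eq:characterization}. The only difference is bookkeeping: the paper splits the numerical flux as $\widehat\bJ_\Lambda=\bJ_\Lambda+(\widehat\bJ_\Lambda-\bJ_\Lambda)$ inside global jump pairings (its identities \eqref{eq:relation-lambda-1}--\eqref{eq:relation-lambda-2}), whereas you split the trace test function as $\xi=U_\xi+(\xi-U_\xi)$ and work cell by cell, which is an equivalent organization of the same computation.
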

\begin{proof}
    Before deriving \eqref{eq:characterization}, we first provide some essential identities. By summing up the local flux problem \eqref{eq:discrete-flux} with $\widehat U = \Lambda$ 
    % \PF{$\lambda$?} 
    for all $K\in \cT$, we have
    \begin{equation}\label{eq:relation-lambda-1}
        \frac1\alpha(\bJ_\Lambda,\bQ)_{\cT,\mu} - ( U_\Lambda,\DIV \bQ)_{\cT,\mu}
        =- \langle \Lambda, \jump{\mu \bQ}\rangle_{\cE}
    \end{equation}
    We use integration by parts for the first term on the left hand side of
    \eqref{eq:discrete-div} and sum up the equations for all $K\in \cT$ to
    obtain that (note that $f=0$)
    \begin{equation}\label{eq:relation-lambda-2}
        (\DIV \bJ_\Lambda, W)_{\mathcal T,\mu} = \langle \jump{\mu(\bJ_\Lambda-\widehat\bJ_\Lambda)W}, 1\rangle_{\cE} .
    \end{equation}
    Using the above relations, we arrive at
    \[
        \begin{aligned}
            \widetilde a(\Lambda, \xi)             & = -\langle \jump{\mu\bJ_\Lambda},\xi\rangle_{\cE} -\langle \jump{\mu(\widehat{\bJ}_\Lambda-\bJ_\Lambda)},\xi\rangle_{\cE} &                                                        & \\
                                                   & = \frac{1}\alpha (\bJ_\xi,\bJ_\Lambda)_{\cT,\mu}
            -(U_{\xi},\DIV\bJ_{\Lambda})_{\cT,\mu} &                                                                                                                           &                                                          \\
                                                   & \qquad -\langle \jump{\mu(\widehat{\bJ}_\Lambda-\bJ_\Lambda)},\xi\rangle_{\cE}                                            &                                                        &
            \text{by \eqref{eq:relation-lambda-1} with } \Lambda=\xi, \bQ = \bJ_{\Lambda}                                                                                                                                                 \\
                                                   & = \frac{1}\alpha (\bJ_\xi,\bJ_\Lambda)_{\cT,\mu}
            + \langle \jump{\mu(\widehat\bJ_\Lambda- \bJ_\Lambda)(U_\xi-\xi)}, 1\rangle_{\cE}
                                                   &                                                                                                                           & \text{by \eqref{eq:relation-lambda-2} with } W=U_\xi .
        \end{aligned}
    \]
    The proof in complete by inserting the definition of the numerical flux \eqref{eq:ldg-flux}.
\end{proof}
%\PF{$\bJ_\xi$ does not appear in (21), yet after the second equality sign it appears here...}

\begin{remark}
    The above theorem also shows that the local problems
    \eqref{eq:discrete-local} for all $K\in \cT$ together with the weak
    transmission condition
    \[
        \langle \jump{\mu\widehat\bJ},\xi \rangle_\cE = 0,\quad\forall \xi\in \cM(0;\cE) .
    \]
    give rise to a symmetric discrete global system. %\PF{symmetric what?}
\end{remark}

\begin{remark}\label{rem:local-nonnegativity}
    For $\Lambda\in \cM(0;\cE)$, we recall that
    \[
        0\le  \widetilde a(\Lambda,\Lambda) = \sum_{K\in\cT} -\langle \widehat \bJ_\Lambda\cdot \bn, \Lambda \rangle_{\partial K,\mu_K} =: \sum_{K\in \cT} I_K .
    \]
    % \PF{Are we maybe missing some normal vector in the second to last term?}
    We can also show the above by proving the nonnegativity of $I_K$. Indeed,
    let $\widehat U=\Lambda$, and denote with $\bJ_\Lambda$ and $U_\Lambda$ the
    corresponding solutions to the local problems in \eqref{eq:discrete-local}.
    Then choosing $\bQ=\bJ_\Lambda$, and $W=U_\Lambda$ in
    \eqref{eq:discrete-local}, and summing up the equations with $f=0$ we obtain
    \[
        I_K = \frac1\alpha(\bJ_\Lambda,\bJ_\Lambda)_{K,\mu_K}
        + \langle \tau_K(U_\Lambda-\Lambda),U_\Lambda-\Lambda\rangle_{\partial K,\mu_K} \ge 0.
    \]
\end{remark}

\subsection{Well-posedness of the global problem}
We recall that the global weight function $\mu$ satisfies that $\mu|_K = \mu_K$
for $K\in\cT$ with $\mu_K$ defined by \eqref{eq:weight}. In general, $\mu$ is
discontinuous across elements $K$, due to the discontinuity of $\bbeta$ and to
the choice of a different $\bx_K$ in each element $K$. However, if the vector
field $\bbeta$ is the gradient of a globally continuous, piecewise linear
$\psi$, then we can choose $\bx_K = \boldsymbol 0$ for every element $K$ and
obtain a resulting $\mu$ which is continuous on the entire $\Omega$ (owing to
the definition \eqref{eq:weight-v}). With this property it is straight forward to
show that \eqref{eq:global-system} is well-posed under some conditions on the
choice of the stabilization parameter $\tau_K$ and on the properties of the
vector $\bbeta$.

\begin{theorem}\label{thm:solvable-continuous} Assume that i) $\psi$ is
    continuous and piecewise linear, and that $\bbeta = -\GRAD \psi$ is
    piecewise constant on $\mathcal T$, ii) $\tau_K$ is strictly positive on
    $\partial K$ for all $K\in \cT$. Then problem \eqref{eq:global-system} is
    uniquely solvable.
\end{theorem}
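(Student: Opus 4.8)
The plan is to treat \eqref{eq:global-system} as a square linear system on the finite-dimensional space $\cM(0;\cE)$ (the solution lives in the affine space $\cM(\pi g_D;\cE)$ modelled on $\cM(0;\cE)$), so that unique solvability is equivalent to injectivity: I must show that $a(\widehat U,\xi)=0$ for all $\xi\in\cM(0;\cE)$ forces $\widehat U=0$. The key structural input from hypothesis~i) is that, choosing $\bx_K=\boldsymbol 0$ in \eqref{eq:weight} for every $K$, the weight $\mu=\exp(\psi/\alpha)$ is continuous across $\cT$; hence $\mu$ is single-valued on each face and $\jump{\mu\widehat\bJ_0}_e=\mu|_e\,\jump{\widehat\bJ_0}_e$ on every $e\in\cE^\circ$.

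First I would use this continuity to transfer the problem onto the symmetric part $\widetilde a$ of the decomposition \eqref{eq:decomp}. Since $a(\widehat U,\xi)=-\langle\jump{\widehat\bJ_0},\xi\rangle_{\cE}$ vanishes for all $\xi\in\cM(0;\cE)$ and $\jump{\widehat\bJ_0}_e\in\cM(e)$, testing face by face gives $\jump{\widehat\bJ_0}_e=0$ on every $e\in\cE^\circ\cup\Gamma_N$ (the faces on $\Gamma_D$ are unconstrained, which is harmless). Multiplying by the positive factor $\mu|_e$ yields $\jump{\mu\widehat\bJ_0}_e=0$ on the same faces, so that $\widetilde a(\widehat U,\xi)=0$ for all $\xi$, and in particular $\widetilde a(\widehat U,\widehat U)=0$. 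By Lemma~\ref{lem:symmetric-part} and Remark~\ref{rem:local-nonnegativity} we have $\widetilde a(\widehat U,\widehat U)=\sum_{K\in\cT} I_K$ with $I_K=\tfrac1\alpha(\bJ_{\widehat U},\bJ_{\widehat U})_{K,\mu_K}+\langle\tau_K(U_{\widehat U}-\widehat U),U_{\widehat U}-\widehat U\rangle_{\partial K,\mu_K}\ge 0$; since $\alpha>0$, $\mu_K>0$, and by hypothesis~ii) $\tau_K>0$ on all of $\partial K$, each $I_K=0$ forces $\bJ_{\widehat U}=\boldsymbol 0$ in $K$ and $U_{\widehat U}=\widehat U$ on $\partial K$.

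Next I would run a cellwise rigidity argument. Inserting $\bJ_{\widehat U}=\boldsymbol 0$ into the local flux equation \eqref{eq:discrete-flux}, integrating by parts the term $(\alpha U,\DIV\bQ)_{K,\mu_K}$, and using $U_{\widehat U}=\widehat U$ on $\partial K$ to cancel the boundary contributions, I am left with $\int_K\GRAD(\mu_K U)\cdot\bQ\,\diff\bx=0$ for all $\bQ\in\bcQ(K)$. Because $\GRAD(\mu_K U)=\mu_K(\GRAD U-\tfrac\bbeta\alpha U)$ and $\GRAD U-\tfrac\bbeta\alpha U\in\bcQ(K)$, the choice $\bQ=\GRAD U-\tfrac\bbeta\alpha U$ gives $\GRAD U=\tfrac\bbeta\alpha U$, i.e. $\mu_K U$ is constant on each $K$.

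The main obstacle is turning this local information into $\widehat U=0$. On every cell where $\bbeta\neq\boldsymbol 0$ the identity $\mu_K U\equiv\text{const}$ forces the polynomial $U$ to coincide with a constant multiple of the genuine exponential $\mu_K^{-1}$ (see \eqref{eq:weight-v}), which is only possible if $U\equiv 0$; on cells where $\bbeta=\boldsymbol 0$ one merely concludes that $U$ is constant. I would then propagate: the matching $U_{\widehat U}=\widehat U$ across shared faces, the Dirichlet constraint $\widehat U=0$ on $\Gamma_D$, and the connectivity of $\cT$ force these cellwise constants to vanish as well, so $U\equiv 0$ and hence $\widehat U=0$ on the whole skeleton. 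This establishes injectivity and therefore unique solvability. I expect the delicate points to be precisely this globalization over the (possibly present) cells where $\bbeta$ degenerates, and the rigidity step that a nontrivial polynomial cannot equal a genuine exponential; these are the places where hypotheses~i)--ii) interlock with the geometry of $\cT$ and the boundary data.
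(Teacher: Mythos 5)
Your proposal is correct and follows essentially the same route as the paper's proof: the continuity of $\mu$ (obtained by choosing $\bx_K=\boldsymbol 0$ in \eqref{eq:weight}) kills the $E$ term in the decomposition \eqref{eq:decomp} so that $a$ reduces to $\widetilde a$; Lemma~\ref{lem:symmetric-part} together with Remark~\ref{rem:local-nonnegativity} then gives $\bJ_{\widehat U}=\boldsymbol 0$ and $U_{\widehat U}=\widehat U$ on $\partial K$; integration by parts in \eqref{eq:discrete-flux} and the test function $\bQ=\GRAD U_{\widehat U}-\tfrac{\bbeta}{\alpha}U_{\widehat U}\in\bcQ(K)$ yield $\GRAD(\mu U_{\widehat U})=0$ cellwise, exactly as in \eqref{eq:mid-step-2}--\eqref{eq:mid-step-3}, after which the polynomial-versus-exponential rigidity forces the trivial solution. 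If anything, you are slightly more explicit than the paper on two points it compresses --- deducing $\jump{\widehat\bJ_0}_e=0$ face by face before multiplying by $\mu|_e$, and the final propagation over cells with $\bbeta=\boldsymbol 0$ via face matching, the Dirichlet data, and connectivity of $\cT$ --- so the substance is identical.
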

\begin{proof}
    Since the system matrix is square, it suffices to show that given vanishing
    $f$, $g_D$, $g_N$ such that $a(\Lambda,\xi) = 0$ for all $\xi\in
    \cM(0;\cE)$, we can only obtain trivial solutions on $\cE$.
    Since $\mu$ is continuous when we choose one $\bx_K=\boldsymbol 0$ for all
    $K\in\cT$ and $\widehat\bJ_\Lambda$ is single-valued thanks to
    \eqref{eq:decomp},  we have $E(\Lambda,\xi) = 0$ and hence
    $0=a(\Lambda,\xi)=\widetilde a(\Lambda, \xi)$. Invoking
    Lemma~\ref{lem:symmetric-part} as well as
    Remark~\ref{rem:local-nonnegativity}, we immediately obtain $\bJ_\Lambda=0$
    in $K$ and $\tau_K(U_\Lambda-\Lambda)=0$ on $\partial K$. By assumption for
    $\tau_K$, $\Lambda=U_\Lambda$ on $\partial K$. We next want to show that
    $U_\Lambda = \mathbf{0}$. Applying integration by parts in
    \eqref{eq:discrete-flux} implies that
    \begin{equation}\label{eq:mid-step-2}
        (\GRAD(\mu U_\Lambda), \bQ)_{K}
        -\langle \mu(U_\Lambda-\Lambda), \bQ\cdot\bn\rangle_{\partial K} = 0 .
    \end{equation}
    We again apply $U_\Lambda=\Lambda$ on $\partial K$ into \eqref{eq:mid-step-2} to get
    \begin{equation}\label{eq:mid-step-3}
        (\GRAD(\mu U_\Lambda),\bQ)_{K} = 0, \quad\forall \bQ\in \bcQ(K) .
    \end{equation}
    Letting $\bQ = \GRAD U_\Lambda - \tfrac\bbeta\alpha U_\Lambda \in \bcQ(K)$ implies that
    \[
        0 = (\GRAD(\mu U_\Lambda),\bQ)_{K} = (\GRAD(\mu U_\Lambda),\GRAD(\mu U_\Lambda))_{K,1/\mu} .
    \]
    Hence, we deduce that $U_\Lambda = C_K/\mu$ for some constant $C_K$ in each $K$ and $\Lambda=U_\Lambda$ on $\partial K$. If $\bbeta=\mathbf{0}$, we derive that $U_\Lambda$ is a constant and hence $U_\Lambda=0$ due to the vanishing Dirichlet boundary condition. If $\bbeta\neq \mathbf{0}$, as $U_\Lambda\in \cW(K)$, we conclude that $U_\Lambda=0$ and hence $\Lambda=0$. The proof is complete.
\end{proof}
\begin{remark}
Notice that assumption ii) above is more restrictive than the one required in
Proposition~\ref{prop:local-solvable} for simplicial meshes, where strict
positivity of $\tau_k$ is required only on part of $\partial K$, instead of on all
$\partial K$. This is also more restrictive w.r.t. classical results for HDG
methods, and derives from the fact that the weight function limits the
applicability of the technique used in ~\cite[Lemma 3.1]{cockburn2009unified}.
\end{remark}

% \begin{remark}
%     We can extend the above argument to show the well-posedness of the global system for the case when $\bbeta = -\GRAD \psi$ with $\psi\in C^0(\Omega)\cap \cP_1(\cT)$. So $\bbeta$ is a piecewise constant function and $\mu$ is continuous. This will be used to develop the numerical scheme for drift-diffusion system when the electric potential $\psi$ is approximated by continuous piecewise linear functions subordinate to $\cT$.
% \end{remark}

\subsection{Well-posedness for the one-dimensional case with \texorpdfstring{$\beta\in L^\infty(\Omega)$}{beta}}\label{subsec:1d}

If $\Omega=[a,b]\subset \Real$, we can still assume that $\beta$ is a piecewise
constant function even though $\psi$ may not be continuous.

\begin{corollary}\label{cor:solvable-1d} Assume that $\beta$ is a piecewise
    constant function with $\beta_K:=\beta|_K $. We also assume that $\tau_K$ is
    positive on $\cE$. Then we can construct $\mu$ such that problem
    \eqref{eq:global-system} is uniquely solvable.
\end{corollary}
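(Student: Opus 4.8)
The plan is to reduce the corollary to Theorem~\ref{thm:solvable-continuous} by showing that, although $\psi$ is now only piecewise linear and may jump across the mesh nodes, in one space dimension we retain enough freedom in the definition of the weight to render $\mu$ globally continuous. Recall from Remark~\ref{rem:rescaling} that on each cell $K$ the weight \eqref{eq:weight} is determined only up to the choice of the reference point $x_K$, and that changing $x_K$ multiplies $\mu_K$ by the positive constant $\exp(\beta_K x_K/\alpha)$. Thus on each cell we may equivalently write $\mu_{K}(x)=c_{K}\exp(-\beta_K x/\alpha)$ with a free scaling constant $c_K>0$, and every such weight still satisfies $\mu_K'=-(\beta_K/\alpha)\mu_K$, so that the drift is eliminated element-wise exactly as before. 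Once a continuous $\mu$ is in hand, the argument of Theorem~\ref{thm:solvable-continuous} applies verbatim, and that is where the bulk of the work has already been done.

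The construction of a continuous $\mu$ is the only genuinely new step, and it is here that the one-dimensional structure is essential. I would order the cells along $[a,b]$ as a chain $K_1,\dots,K_N$ with interior nodes $x_1<\dots<x_{N-1}$, each node shared by exactly two consecutive cells. Setting $c_{1}=1$, I then propagate: at the node $x_i$ shared by $K_i$ and $K_{i+1}$, imposing $\mu_{K_i}(x_i)=\mu_{K_{i+1}}(x_i)$ amounts to the single scalar relation
\[
    c_{i+1} = c_{i}\,\exp\!\Big(-\tfrac{(\beta_{K_i}-\beta_{K_{i+1}})\,x_i}{\alpha}\Big),
\]
which can always be solved for $c_{i+1}$, with no nonvanishing-denominator restriction and regardless of whether any $\beta_{K_j}$ vanishes (for a cell with $\beta_{K}=0$ the continuity is carried by the scaling constant rather than by the choice of $x_K$). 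Running this recursion once through the chain produces a weight $\mu$ that is continuous on all of $[a,b]$. The reason this cannot be expected in higher dimension is precisely that a one-dimensional mesh is a path without cycles: the continuity conditions form an acyclic system and never over-determine the $c_K$, whereas around an interior vertex in two dimensions the several incident compatibility conditions generally conflict unless $\psi$ is already continuous.

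With $\mu$ continuous, I would then repeat the proof of Theorem~\ref{thm:solvable-continuous}. Suppose $f$, $g_D$, $g_N$ vanish and $a(\Lambda,\xi)=0$ for all $\xi\in\cM(0;\cE)$. Testing over $\xi$ yields $\jump{\widehat\bJ_\Lambda}=0$, so $\widehat\bJ_\Lambda$ is single valued; since $\mu$ is continuous one has $\jump{\mu\widehat\bJ_\Lambda}=\mu\jump{\widehat\bJ_\Lambda}=0$, whence $E(\Lambda,\xi)=0$ in the decomposition \eqref{eq:decomp} and $\widetilde a(\Lambda,\Lambda)=0$. By Lemma~\ref{lem:symmetric-part} together with Remark~\ref{rem:local-nonnegativity} this forces $\bJ_\Lambda=0$ in each cell and, using $\tau_K>0$ on $\cE$, $U_\Lambda=\Lambda$ on $\partial K$. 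Integrating \eqref{eq:discrete-flux} by parts and testing with $\bQ=U_\Lambda'-(\beta_K/\alpha)U_\Lambda\in\bcQ(K)$ then gives $(\mu U_\Lambda)'=0$ on each cell, so $\mu U_\Lambda$ is constant cell by cell. Because $\mu$ is continuous and $U_\Lambda=\Lambda$ is single valued at the nodes, these constants agree across every node, giving a single global constant $C$ with $\mu U_\Lambda\equiv C$ on $[a,b]$. On any cell with $\beta_K\neq 0$ the relation $U_\Lambda=C/\mu_K$ exhibits $U_\Lambda$ as a genuine exponential, impossible for a nonzero polynomial in $\cW(K)$, so $C=0$; in the degenerate case $\beta\equiv 0$ the same conclusion follows from the vanishing Dirichlet datum. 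Hence $U_\Lambda\equiv 0$ and $\Lambda=0$.

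The main obstacle is therefore not the elliptic or positivity machinery, which is inherited intact from Theorem~\ref{thm:solvable-continuous}, but isolating and verifying the weight construction: one must recognise that the per-cell rescaling freedom in \eqref{eq:weight} is exactly what is needed, check that the recursion is unconditionally solvable (in particular when some $\beta_K=0$), and confirm that the rescaled local problems still assemble into the same symmetric global system so that \eqref{eq:decomp} and Lemma~\ref{lem:symmetric-part} remain applicable. I would also make explicit the standing assumption $\Gamma_D\neq\emptyset$, without which the final ``$C=0$'' step fails in the purely diffusive regime.
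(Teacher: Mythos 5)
Your proposal is correct and follows the paper's route: the paper likewise reduces the corollary to the proof of Theorem~\ref{thm:solvable-continuous} by constructing a globally continuous weight, exploiting the per-cell freedom in \eqref{eq:weight} and propagating one matching condition through the chain of cells --- its condition \eqref{eq:mu-continuous}, $\tfrac{\beta_{K_i}}{\alpha}(x_i-x_{K_i})=\tfrac{\beta_{K_{i+1}}}{\alpha}(x_i-x_{K_{i+1}})$, solved recursively for the reference points $x_{K_i}$ starting from a fixed $x_{K_1}$.

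The one substantive difference is the gauge you chose, and it buys you genuinely more generality. The paper parameterizes the freedom by the points $x_K$, so its recursion solves \eqref{eq:mu-continuous} for $x_{K_{i+1}}$ and tacitly requires $\beta_{K_{i+1}}\neq 0$; in the strict form \eqref{eq:weight} a cell with $\beta_K=0$ has $\mu_K\equiv 1$ independently of $x_K$, and continuity can then become infeasible (for instance, a cell with $\beta_K\neq 0$ sandwiched between two cells with vanishing $\beta$ would need its exponential weight to equal $1$ at both endpoints). Your multiplicative gauge $\mu_K(x)=c_K\exp(-\beta_K x/\alpha)$ is equivalent to the paper's via $c_K=\exp(\beta_K x_K/\alpha)$ whenever $\beta_K\neq 0$, but your recursion for $c_{i+1}$ is unconditionally solvable, so the degenerate cells are covered; the mild departure from the literal form \eqref{eq:weight} is harmless, since the argument only uses positivity of $\mu$ and $\mu_K'=-(\beta_K/\alpha)\mu_K$, and by Remark~\ref{rem:rescaling} a constant rescaling of the weight leaves the local solutions unchanged. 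Two further points where you are more explicit than the paper: gluing the per-cell constants of $\mu U_\Lambda$ into a single global constant handles the mixed case (some $\beta_K$ zero, some not), which the proof of Theorem~\ref{thm:solvable-continuous} only treats in the two homogeneous alternatives; and your observation that $\Gamma_D\neq\emptyset$ is needed to conclude in the purely diffusive regime is a standing assumption the paper leaves implicit.
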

\begin{proof}
    Our goal is to construct a continuous weight function $\mu$ so that
    $E(U_\Lambda,\xi)=0$ in the proof of Theorem~\ref{thm:solvable-continuous}.
    We set $\{x_i\}_{i=0}^N$ to be the grid points so that
    $a=x_0<\ldots<x_N=b$ and set $K_i=[x_{i-1},x_i]$. For each
    $i=1,\ldots,N-1$, we define $\mu_{K_i}$ and $\mu_{K_{i+1}}$ so that $\mu$ is
    continuous at $x_i$. To achieve this, we set
    \begin{equation}\label{eq:mu-continuous}
        \frac{\beta_{K_i}}{\alpha}(x_i-x_{K_i}) = \frac{\beta_{K_{i+1}}}{\alpha} (x_i-x_{K_{i+1}}) .
    \end{equation}
    Given a fixed $x_{K_{1}}\in \Real$, we can solve for $x_{K_i}$ for $i=1,\ldots, N$ based on \eqref{eq:mu-continuous} and hence the resulting $\mu$ is continuous. The proof is complete according to the proof of Theorem~\ref{thm:solvable-continuous}.
\end{proof}

\subsection{Well-posedness for a general case}\label{subsec:general} In this
section we want to extend the assumption on $\psi$, namely we assume that there
exists a function $\psi\in W^{1,\infty}(\Omega)$ so that $\bbeta = -\GRAD
\psi\in [L^\infty(\Omega)]^d$. We then choose the weight function
\eqref{eq:weight-v}. To show the well-posedness of the global problem
\eqref{eq:global-system}, we shall use the Poincar\'e inequality: for $w\in
H^1_D(\Omega)$, there exists a constant $C_p\sim C\diam(\Omega)$ satisfying
\[
    \|w\|_{L^2(\Omega)} \le C_p \|\GRAD w\|_{L^2(\Omega)} .
\]

\begin{theorem}\label{thm:solvable-general}
    Suppose that $\psi\in W^{1,\infty}(\Omega)$. We further assume that $\|\bbeta\|_{L^\infty(\Omega)}$ is small enough such that the Poincar\'e constant satisfies
    \begin{equation}\label{ineq:cp}
       \frac{\|\bbeta\|_{L^\infty(\Omega)} C_p}{2\alpha} < 1.
    \end{equation}
    Under the assumption that $\tau_K$ is positive on $\partial K$ for all $K\in\cT$, the discrete problem \eqref{eq:global-system} is then uniquely solvable.
\end{theorem}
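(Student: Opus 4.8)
The plan is to follow the proof of Theorem~\ref{thm:solvable-continuous} essentially unchanged up to its last step, replacing only the argument that concluded $U_\Lambda=0$, which there relied on the piecewise-polynomial structure of $\bbeta$ that is now unavailable. First I would note that $\psi\in W^{1,\infty}(\Omega)$ is Lipschitz on the convex polytope $\Omega$, so the weight $\mu=\exp(\psi/\alpha)$ from \eqref{eq:weight-v} is \emph{continuous} across every interelement face; this is what plays, here, the role that the special choice $\bx_K=\mathbf 0$ played in Theorem~\ref{thm:solvable-continuous}. Hence, for $\Lambda\in\cM(0;\cE)$ with $a(\Lambda,\xi)=0$ for all $\xi\in\cM(0;\cE)$, the vanishing of $a(\Lambda,\cdot)=-\langle\jump{\widehat\bJ_\Lambda},\cdot\rangle_\cE$ makes $\jump{\widehat\bJ_\Lambda}=0$ on $\cE\setminus\Gamma_D$, so that (with $\mu$ continuous, and since $\Lambda$ vanishes on $\Gamma_D$) the correction term satisfies $E(\Lambda,\Lambda)=0$ in \eqref{eq:decomp} and $\widetilde a(\Lambda,\Lambda)=a(\Lambda,\Lambda)=0$. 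Lemma~\ref{lem:symmetric-part} with Remark~\ref{rem:local-nonnegativity} then gives $\bJ_\Lambda=\mathbf 0$ in each $K$ and, since $\tau_K>0$ on $\partial K$, the trace identity $U_\Lambda=\Lambda$ on $\partial K$; as $\Lambda$ is single-valued on $\cE$ and vanishes on $\Gamma_D$, the traces of $U_\Lambda$ match across faces and $U_\Lambda\in H^1_D(\Omega)$.

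Next I would insert $\bJ_\Lambda=\mathbf 0$ into \eqref{eq:discrete-flux} and integrate by parts to recover \eqref{eq:mid-step-3}, namely $(\GRAD(\mu U_\Lambda),\bQ)_K=0$ for all $\bQ\in\bcQ(K)$. This is exactly where the proof of Theorem~\ref{thm:solvable-continuous} must change: there one chooses $\bQ=\GRAD U_\Lambda-\tfrac{\bbeta}\alpha U_\Lambda$ to obtain $\GRAD(\mu U_\Lambda)=\mathbf 0$, but for $\bbeta\in[L^\infty(\Omega)]^d$ this field is not polynomial and hence not admissible. The admissible substitute is $\bQ=\GRAD U_\Lambda$, which belongs to $\bcQ(K)$ for both the simplicial ($[\cP_{k-1}(K)]^d\subset[\cP_k(K)]^d$) and the tensor-product ($[\mathcal{Q}_k(K)]^d$) spaces. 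Using $\GRAD\mu=-\tfrac{\bbeta}\alpha\mu$ and summing over all cells — legitimate since $U_\Lambda\in H^1_D(\Omega)$, so the broken gradient is the weak gradient — yields the energy identity
\[
    \|\GRAD U_\Lambda\|_{\mu}^2=\tfrac1\alpha\,(\bbeta\,U_\Lambda,\GRAD U_\Lambda)_{\cT,\mu}.
\]

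To close, I would bound the right-hand side by Cauchy--Schwarz as $\tfrac{\|\bbeta\|_{L^\infty(\Omega)}}\alpha\,\|U_\Lambda\|_{\mu}\,\|\GRAD U_\Lambda\|_{\mu}$, cancel one power of $\|\GRAD U_\Lambda\|_{\mu}$, and apply the Poincar\'e inequality to $U_\Lambda\in H^1_D(\Omega)$; the smallness hypothesis \eqref{ineq:cp} is exactly what makes the resulting self-referential estimate a strict contradiction unless $\GRAD U_\Lambda=\mathbf 0$, after which Poincar\'e gives $U_\Lambda\equiv0$ and hence $\Lambda=0$ (recall $U_\Lambda=\Lambda$ on $\partial K$). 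Since the global matrix is square, the triviality of its kernel is equivalent to unique solvability. The main obstacle is the quantitative bookkeeping in this last step: because $\mu$ is not piecewise constant, the weighted norm $\|\cdot\|_{\mu}$ in the energy identity cannot be exchanged for the unweighted $L^2$-norm of the stated Poincar\'e inequality without either a weighted Poincar\'e estimate or control of the cell-wise oscillation of $\mu$. I expect the factor $2$ in \eqref{ineq:cp} to arise precisely from combining a Young-type splitting (which keeps the weight in the cross term) with this Poincar\'e step, and confirming that the threshold $\tfrac{\|\bbeta\|_{L^\infty(\Omega)}C_p}{2\alpha}<1$ is sharp enough to force $U_\Lambda\equiv0$ is the most delicate part.
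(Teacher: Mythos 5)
Your reduction to \eqref{eq:mid-step-3} and the choice $\bQ=\GRAD U_\Lambda$ match the paper exactly, but the closing estimate has a genuine gap --- one you half-acknowledge yourself. From your expansion $(\GRAD(\mu U_\Lambda),\GRAD U_\Lambda)_K = \|\GRAD U_\Lambda\|_{K,\mu}^2 - \tfrac1\alpha(\bbeta U_\Lambda,\GRAD U_\Lambda)_{K,\mu}$, Cauchy--Schwarz (or any Young splitting) gives at best $\|\GRAD U_\Lambda\|_\mu \le \tfrac{\|\bbeta\|_{L^\infty(\Omega)}}{\alpha}\|U_\Lambda\|_\mu$, and you then need a Poincar\'e inequality \emph{in the $\mu$-weighted norm}, which is not the inequality the paper states and which you leave unproved. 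Worse, even granting such a weighted Poincar\'e inequality with the same constant $C_p$, this route only closes under $\|\bbeta\|_{L^\infty(\Omega)}C_p/\alpha<1$, a condition twice as restrictive as \eqref{ineq:cp}. Your conjecture that the factor $2$ arises from a Young-type splitting is incorrect: optimizing the Young parameter in your identity reproduces exactly the Cauchy--Schwarz threshold, with no gain.

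The missing idea is a symmetrizing (ground-state) substitution performed \emph{before} any estimation. Since $\GRAD\mu = -\tfrac{\bbeta}{\alpha}\mu$, setting $v=\mu^{1/2}U_\Lambda$ gives $\GRAD(\mu U_\Lambda)=\mu^{1/2}\bigl(\GRAD v-\tfrac{\bbeta}{2\alpha}v\bigr)$ and $\GRAD U_\Lambda=\mu^{-1/2}\bigl(\GRAD v+\tfrac{\bbeta}{2\alpha}v\bigr)$, so the weights and the cross terms cancel \emph{exactly}:
\begin{equation*}
    0=(\GRAD(\mu U_\Lambda),\GRAD U_\Lambda)_K
    = \|\GRAD(\mu^{1/2}U_\Lambda)\|_{L^2(K)}^2
    - \Bigl\|\frac{|\bbeta|}{2\alpha}\mu^{1/2}U_\Lambda\Bigr\|_{L^2(K)}^2 .
\end{equation*}
This is precisely how the paper proceeds. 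Summing over $K$ and noting that $\mu^{1/2}U_\Lambda\in H^1_D(\Omega)$ --- because $U_\Lambda=\Lambda$ on $\partial K$ makes $U_\Lambda$ continuous across faces with vanishing Dirichlet trace, and $\mu^{1/2}$ is Lipschitz since $\psi\in W^{1,\infty}(\Omega)$ --- the \emph{unweighted} Poincar\'e inequality applies directly to $w=\mu^{1/2}U_\Lambda$ and yields $0\ge\bigl(C_p^{-2}-\|\bbeta\|_{L^\infty(\Omega)}^2/(4\alpha^2)\bigr)\|\mu^{1/2}U_\Lambda\|_{L^2(\Omega)}^2$, so \eqref{ineq:cp} forces $U_\Lambda=0$ and hence $\Lambda=0$. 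The factor $2$ is thus an exact algebraic cancellation --- half of $\bbeta/\alpha$ lands on each factor of the product --- not the residue of a splitting inequality, and neither a weighted Poincar\'e estimate nor control of the cell-wise oscillation of $\mu$ is needed anywhere.
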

\begin{proof}
    Following the proof of Theorem~\ref{thm:solvable-continuous}, let us start our proof at \eqref{eq:mid-step-3}. Under the current setting, $\bQ=\GRAD U_\Lambda-\tfrac\bbeta\alpha U_\Lambda\notin \bcQ(K)$. Here we choose $\bQ=\GRAD U_\Lambda$. We rewrite \eqref{eq:mid-step-3} to get
    \[
        0 = (\GRAD(\mu U_\Lambda), \GRAD U)_K
        = \|\GRAD(\mu^{1/2}U_\Lambda)\|_{L^2(K)}^2 - \left\|\frac{|\bbeta|}{2\alpha}\mu^{1/2}U_\Lambda\right\|_{L^2(K)}^2,
    \]
    where we indicate with $|\bbeta|$ the Euclidean norm of $\bbeta$.
    Now we sum up the above equation for all $K\in \cT$. Note that $U_\Lambda=\Lambda$ implies that $U_\Lambda\in H^1_D(\Omega)$. So $\mu^{1/2}U_\Lambda\in H^1_D(\Omega)$. By the Poincar\'e inequality, we have
    \[
        \begin{aligned}
            0 & = \|\GRAD(\mu^{1/2}U_\Lambda)\|_{L^2(\Omega)}^2 -  \left\|\frac{|\bbeta|}{2\alpha}\mu^{1/2}U_\Lambda\right\|_{L^2(\Omega)}^2  \\
              & \ge\left( \frac{1}{C_p^2} - \frac{\|\bbeta\|^2_{L^\infty(\Omega)}}{4\alpha^2}\right) \|\mu^{1/2}U_\Lambda\|_{L^2(\Omega)}^2 .
        \end{aligned}
    \]
    Assumption  \eqref{ineq:cp} shows that the constant on the right hand side above is strictly positive. Hence $U_\Lambda = 0$ and the proof is complete.
\end{proof}

Theorem~\ref{thm:solvable-general} shows that, under the same conditions of well-posedness for
classical drift-diffusion systems, we get well posedness also of our weighted formulation.

\subsection{A limiting case in the one-dimensional setting}\label{ssec:limiting-case}

Let us consider the numerical scheme \eqref{eq:discrete-full} in 1d with $k=0$, \ie using piecewise constant functions. Set $\Omega = (a,b)$ for some real numbers $a<b$ and $\Omega$ can be subdivided with $a=x_0 <\ldots < x_{N} = b$. For $i=0,\ldots, N-1$, denote $h_i=x_{i+1}-x_i$. We also denote $\{\Lambda_i\}_{i=0}^N$ the trace values on $x_i$ and $\{(J_i,U_i)\}_{i=1}^N$ the cell values on the interval $(x_{i-1},x_i)$.

Using the constant test functions in \eqref{eq:discrete-full}, we can write $J_i$ as the function of $\{\Lambda_i\}$. That is for $i=1,\ldots,N$,
\begin{equation}\label{eq:weighted-sg}
    J_i = \frac{\alpha}{h_i} \bigg(\ber{\frac{\beta h_i}{\alpha}}\Lambda_{i-1} - \ber{-\frac{\beta h_i}\alpha } \Lambda_{i} \bigg) ,
\end{equation}
where $\ber{.}$ is the Bernoulli function:
\[
    \ber t := \frac{t}{e^t - 1} .
\]
The above flux is usually referred to as the Scharfetter-Gummel flux. For the second equation in \eqref{eq:discrete-full}, we write for $i=1,\ldots,N$,
\begin{equation}\label{eq:weighted-u}
    U_i = \frac{e_i \Lambda_{i-1} + e_{-i}\Lambda_i}{e_h+e_{-h}} + \frac{F_i}{\tau_i(e_{i}+e_{-i})}
\end{equation}
with $e_{\pm i}:= \exp(\pm\tfrac{\beta h_i}{2\alpha})$ and $F_i = \int_{x_{i-1}}^{x_i}f\mu$. In terms of the discrete transmission condition, here we simply assume that the stabilization parameter $\tau$ is a positive constant on $\cE$. Under the piecewise constant setting, we write for $i = 1,\ldots, N-1$,
\begin{equation}\label{eq:weighted-tc}
    J_{i} - J_{i+1} + \tau (  U_i + U_{i+1} -2\Lambda_i ) = 0 .
\end{equation}
Inserting \eqref{eq:weighted-sg} and \eqref{eq:weighted-u} into \eqref{eq:weighted-tc} and letting $\tau$ tend to zero to write the global problem for $\Lambda$ by
\begin{equation}\label{eq:sg}
    \begin{aligned}
         & \frac{\alpha}{h_{i}} \ber{\frac{\beta h_{i}}{\alpha}} \Lambda_{i-1}          \\
         & \qquad
        -\bigg( \frac{\alpha}{h_{i}} \ber{-\frac{\beta h_{i}}{\alpha}}
        + \frac{\alpha }{h_{i+1}} \ber{\frac{\beta h_{i+1}}{\alpha}}  \bigg)  \Lambda_i \\
         & \qquad\qquad
        +\frac{\alpha}{h_{i+1}} \ber{-\frac{\beta h_{i+1}}{\alpha}}\Lambda_{i+1} = F_i .
    \end{aligned}
\end{equation}
The above system is identical to the system built using the finite volume method
together with the Scharfetter-Gummel flux, justifying the interpretation of the
W-HDG method as a high order generalization of the Scharfetter-Gummel stabilization.

\section{Postprocessing}

% Motivation: In HDG you can get better convergence order
% via postprocessing. A priori not clear whether this is also true for weighted
% HDG.

Similarly to what happens in classical HDG
methods~\cite{cockburn2009hybridizable}, we can improve the convergence order of
the W-HDG method by postprocessing the solution. We propose two different
postprocessing procedures in an element-by-element fashion to obtain the density
approximations of $u$ using $\cP_{k+1}$ or $\mathcal Q_{k+1}$ elements for
simplicial or quadrilateral/hexahedral meshes respectively so that these
approximations converge with order of $O(h^{k+2})$ in both $L^2(\Omega)$ and
$L^\infty(\Omega)$ norms.

\subsection{Postprocessing by \texorpdfstring{$L^2(K)$}{l22} minimization} We
consider a density approximation $U_*\in \cP_{k+1}(\cT)$ or $\mathcal
Q_{k+1}(\cT)$ so that for each $K\in \cT$, $U_*$ minimizes the functional
\[
    \int_K |\alpha \GRAD U_* -\bbeta U + \bJ|^2 \diff \bx
\]
under the constraint that $\int_K (U_*-U)\diff \bx = 0$. We note that the constraint is necessary since $U_*+C$ is also a minimizer of the above functional with $C$ being any constant. The approximation $U_*$ can be obtained by solving the following local problem:
\begin{equation}\label{eq:l2-minimization}
    \begin{aligned}
        (1, U_*)_K                & = (1, U)_K,                &                             \\
        (\alpha\GRAD U_*,\GRAD W)_K & = (\bbeta U-\bJ, \GRAD W)_K, & \forall W\in \cP_{k+1}(K) .
    \end{aligned}
\end{equation}
We note that $U_*$ has a superconvergence property with rate $O(h^{k+2})$ in the
$L^2(\Omega)$-sense since both $U$ and $\bJ$ converge in optimal rate
$O(h^{k+1})$ and the local average of $U$, \ie $\tfrac{1}{|K|}\int_K U$ super
converges at rate $O(h^{k+2})$ (cf. \cite{cockburn2009superconvergent}). The
numerical simulation illustrated in Section~\ref{sec:numerical} also suggests
that if $u$ is smooth enough, the postprocessed approximation also converges in
$L^\infty(\Omega)$ with rate $O(h^{k+2})$. We remark, however, that this
postprocessing results does not satisfy the local problem
\eqref{eq:discrete-local} in any sense.

\subsection{Postprocessing based on the local problem \texorpdfstring{\eqref{eq:discrete-local}}{}}
We next construct a postprocessing approximation based on the local problem \eqref{eq:discrete-local} following an idea from \cite{nguyen2009implicit}. The procedure needs more computations compared to the $L^2(K)$-minimization.

\subsubsection{Reconstruction for the flux}
We shall first reconstruct the flux approximation $\bJ$ in the space
$H(\text{div},\Omega)$, the vector space in $[L^2(\Omega)]^d$ so that its
divergence belongs to $L^2(\Omega)$. To achieve this, we consider discontinuous
piecewise Raviart-Thomas-N\'ed\'elec space $\RT_k(\cT)=\{\bQ\in [L^2(\Omega)]^d
: \bQ|_{K}\in \RT_k (K)\}$, where $\RT_k(K) := \text{Im}(\DIV \cP_{k+1}(K)^d)$
or $\RT_k(K) := \text{Im}(\DIV \mathcal{Q}_{k+1}(K)^d)$ for simplicial or
quadrilateral/hexahedral meshes respectively. Using the cell approximation $\bJ$
as well as the trace approximation $\widehat \bJ$, we define the reconstruction
$\bJ_{div}\in \RT_k(\cT)$ as
\begin{equation}\label{eq:rtn-proj}
    \begin{aligned}
        \langle \bJ_{div}\cdot\bn, \bQ \rangle_{e}
                           & =\langle \widehat{\bJ}\cdot\bn, \xi \rangle_{\partial K}, &  & \forall \xi\in \cM(e),                            \\
        (\bJ_{div}, \bQ)_K & = (\bJ, \bQ)_K,                         &  & \forall \bQ\in [\cM_{k-1}(K)]^d\text{ if } k\ge 1 .
    \end{aligned}
\end{equation}
Here we have to point out some properties of $\bJ_{div}$: i) since
$\widehat\bJ$ is single valued on $\cE$, the normal component of $\bJ_{div}$ is
continuous across interior faces. So $\bJ_{div}\in H(\text{div},\Omega)$; ii)
\cite{cockburn2009hybridizable} shows that both $\bJ_{div}$ and $\DIV\bJ_{div}$
converge in optimal rate $O(h^{k+1})$, which is crucial to the postprocessing of
$U$ in the next step.

\subsubsection{Postprocessing of \texorpdfstring{$U$}{}}
Our reconstruction of $U$ is based on the problem \eqref{eq:problem} defined on each element $K$ together with a Neumann boundary condition, \ie at the continuous level, we want to find the pair $(\bj^*, u^*)$ satisfying
\[
    \begin{aligned}
        \bj^* + \alpha \GRAD u^* -\bbeta u^* & = \boldsymbol 0,                 &  & \text{ in } K,         \\
        \DIV \bj^*                           & = \DIV\bJ_{div},     &  & \text{ in } K,         \\
        \bj^*\cdot \bn                       & = \bJ_{div}\cdot\bn, &  & \text{ on }\partial K.
    \end{aligned}
\]
Similar to the $L^2(K)$ minimization scheme, we also need the constraint $\int_K (u^*-U) = 0$ to guarantee that the local reconstruction is unique.

Now we are ready to define an approximation of $(\bj^*, u^*)$ by modifying the discrete local problem \eqref{eq:discrete-local}: letting $\bcQ^*(K)$, $W^*(K)$ and $\cM^*(\partial K)$ be the polynomial spaces like $\bcQ(K)$, $W(K)$ and $\cM(\partial K)$ but with the degree $k+1$, we want to find $(\bJ^*, U^*, \widehat U^*)\in \bcQ^*(K)\times W^*(K)\times \cM^*(\partial K)$ such that $\int_K (U^*-U) = 0$ and for all $\bQ\in \bcQ^*(K)$, $W\in \cW^*(K)$ and $\xi\in \cM^*(K)$,
\begin{equation}\label{eq:local-postprocessing}
    \begin{aligned}
        (\bJ^*, \bQ)_{K,\mu_K} - (\alpha U^*, \DIV\bQ)_{K,\mu_K}
        + \langle \alpha \widehat U^*, \bQ\cdot\bn \rangle_{\partial K,\mu_K}                                             & =0,                             \\
        -(\bJ^*, \GRAD W-\tfrac{\bbeta}\alpha W)_{K,\mu_K} + \langle \widehat{\bJ}^*\cdot\bn, W\rangle_{\partial K,\mu_K} & = (\DIV\bJ_{div},W)_{K,\mu_K},                            \\
        \langle \widehat{\bJ}^*\cdot \bn,\xi \rangle_{\partial K}                                                         & =  \langle \bJ_{div}\cdot \bn,\xi \rangle_{\partial K} ,
    \end{aligned}
\end{equation}
with the numerical flux $\widehat{\bJ}^*$ given by
\[
    \widehat\bJ^* = \bJ^* + \tau_K (U^* - \widehat U^*) \bn .
\]

Proposition~\ref{prop:local-solvable} shows that the above problem admits a
unique solution. Thanks to the $O(h^{k+1})$ rate of convergence for both
$\bJ_{div}$ and $\DIV \bJ_{div}$, and to the superconvergence property for the
local average of $U$, in
\cite{cockburn2008superconvergent,cockburn2009superconvergent} the authors show
that, for $k\ge 1$, the postprocessing approximation $U^*$ converges with rate
$O(h^{k+2})$ in the $L^2(\Omega)$ sense. This convergence behavior is confirmed
numerically also for the W-HDG method in Section~\ref{sec:numerical}.

\section{Numerical illustration}\label{sec:numerical} 

In this section, we will present some two-dimensional numerical experiments to
verify the stability and convergence of our proposed numerical scheme. Our
numerical implementation relies on the \texttt{deal.II} finite element library
\cite{dealII93,dealIIdesign} using bi-polynomial elements $\cQ_k$ under a
sequence of quadrilateral subdivisions. 

Before presenting the examples, we introduce a Gaussian quadrature rule to
compute the inner-products $(v,w)_{K,\mu_K}$ and $\langle v,w\rangle_{\partial
K,\mu_K}$ exactly when the element $K=\Pi_{i=1}^d(a_i,b_i)$ is a hyperrectangle.
The finite element spaces are defined using bi-polynomials, and $\bbeta$ is a
piecewise constant function. The weight function $\mu$ is defined as in
\eqref{eq:weight} with $\bx_K$ denoting the center of $K\in \mathcal T$.

\subsection{Quadrature schemes on rectangular elements with constant
\texorpdfstring{$\bbeta$}{}} We first consider the one-dimensional case
$(v,w)_{\widehat K,\mu}$ where $\widehat K = (0,1)$ and $\mu(\hat x) =
\exp(-\hat\beta (\hat x-\hat x_{\widehat K}))$ for some constant $\hat\beta$. We
generate a Gaussian quadrature scheme $\widehat Q(\widehat\beta,\widehat K)$
based on orthogonal polynomials up to degree $k_0$ (i.e., Gauss quadrature rules
with $k_0$ quadrature points, $k_0\geq 1$) with respect to the inner-product
$(v,w)_{\widehat K,\mu}$. Our integral computation is exact when $vw$ is a
polynomial with degree at most $2k_0-1$. For any interval $K=(a,b)$, we can
compute the inner-product by the change of variable $x=a+h_K\hat x$ with
$h_K=b-a$. Whence,
\[
    \int_K e^{-\beta (x-x_K)} v(x)w(x) \diff x = h_K\int_{\widehat  K} e^{-\beta h_K(\hat x - \hat x_{\widehat K})} \hat u \hat v \diff \hat x .
\]
The integral on the right hand side is computed exactly by the quadrature
$\widehat Q(h_K\beta,\widehat K)$. For the multidimensional case with a constant
$\bbeta$, we apply the tensor product quadrature rule to compute $(v,
    w)_{K,\mu}$ based on the 1d quadrature scheme. Denoting the corresponding
quadrature by $Q_d(\bbeta, K)$ and given a face $e$ of the element $K$, we note
that we can compute similarly $(v,w)_{e, \mu}$ by $Q_{d-1}(\bbeta,
    e)\exp(-\bbeta\cdot(\bx_e-\bx_K))$, with $\bx_e$ denoting the center of the
face.

\subsection{2D convergence tests on uniform Cartesian grids}
We test our proposed numerical scheme \eqref{eq:discrete-full} to approximate the problem \eqref{eq:problem} on the unit square $\Omega=(0,1)^2$. Given a constant vector $\bbeta=(\beta_1,\beta_2)^{\mathtt T}$, we define
the analytic solution to be
\[
    u(x_1,x_2) = x_1 x_2 \frac{(1-\exp((x_1-1)\beta_1))(1-\exp((x_2-1)\beta_2))}{(1-\exp(-\beta_1))(1-\exp(-\beta_2))}
\]
so that the solution vanishes at the boundary (as in the 1d case) and produces boundary layers along $x_1=1$ and $x_2=1$ when $\beta_1$ and $\beta_2$ are large enough, respectively.

\subsubsection{Convergence for  \texorpdfstring{$(\bJ, U)$}{}} Here we set
$\{\mathcal T_j\}_{j=1}^6$ to be sequence of uniform Cartesian grids with the
mesh size $h_j = 2^{-j-1}\sqrt{2}$. In Table~\ref{table:uniform-convergence}, we
report the $L^2(\Omega)$-errors between $(\bj, u)$ and $(\bJ, U)$ as well as the
$L^\infty(\Omega)$ errors for $U$ using different degrees of bi-polynomials. We
note that the stabilization parameter $\tau_K$ is fixed to be 1 on all edges.
The convergence rate showing in the table is computed with
\begin{equation}
    \text{rate}_{j+1} := \log(e_{j+1}/e_j) /\log (\#\text{DoF}_{j+1}/\# \text{DoF}_{j}),
\end{equation}
where $\#\text{DoF}_j$ refers to the number of degrees of freedoms for mesh
refinement at level $j$ and $e_j$ represents the measured error on the
corresponding refinement level. From Table~\ref{table:uniform-convergence}, we
observe the optimal rate of convergence $O(h^{k+1})$ in both $L^2(\Omega)$ and
$L^\infty(\Omega)$ norms. Here we note that the $L^\infty(\Omega)$-error is
approximated by comparing the values on the 6th-order Gaussian quadrature points
on each cell. We also report the error behavior of the largest cell-wise average
of $U$. That is $\max_{K\in\cT}(u-U,1)_K$. 
% \PF{Not sure I understand these three words here. Is that the cell-wise average?} 
We observe that this quantity superconverges with rate $O(h^{2k+1})$ when $k\ge
1$. This helps us to obtain higher order rate of convergence in the procedure of
the postprocessing using bi-polynomials $\mathcal Q_{k+1}$.
\begin{comment}
\WLc{The
    quantity actually converges in $2k+1$ order. This seems strange to me since in
    deal.II step-51, it says that this will converge in the order $k+2$. I think I
    should first double check my code. If the code seems to be okay, I should find a
    reference which contains a statement of this.}
\end{comment}

\begin{table}[hbt!]
    \begin{center}
        \resizebox{\textwidth}{!}{
        \begin{tabular}{|r|r|r|c|c|c|c|c|c|c|c|} \hline
            deg                                                    & \# cells & \# DoFs &
            \multicolumn{2}{|c|}{$\| \bj - \bJ \|_{L^2(\Omega)}$}  &
            \multicolumn{2}{|c|}{$\| u - U \|_{L^2(\Omega)}$}      &
            \multicolumn{2}{|c|}{$\| u - U \|_{L^\infty(\Omega)}$} &
            \multicolumn{2}{|c|}{avg}                                                                                                                                  \\ \hline
                                                                   &          &         & error     & rate & error     & rate  & error     & rate  & error     & rate  \\ \hline
            0                                                      & 16       & 48      & 8.186e+00 & -    & 2.583e-01 & -     & 6.846e-01 & -     & 4.024e-01 & -     \\
                                                                   & 64       & 192     & 5.679e+00 & 0.53 & 3.371e-01 & -0.38 & 1.035e+00 & -0.60 & 7.088e-01 & -0.82 \\
                                                                   & 256      & 768     & 3.402e+00 & 0.74 & 2.763e-01 & 0.29  & 8.435e-01 & 0.30  & 6.070e-01 & 0.22  \\
                                                                   & 1024     & 3072    & 2.115e+00 & 0.69 & 1.653e-01 & 0.74  & 5.761e-01 & 0.55  & 3.961e-01 & 0.62  \\
                                                                   & 4096     & 12288   & 1.363e+00 & 0.63 & 8.714e-02 & 0.92  & 3.479e-01 & 0.73  & 2.414e-01 & 0.71  \\
                                                                   & 16384    & 49152   & 8.366e-01 & 0.70 & 4.436e-02 & 0.97  & 1.965e-01 & 0.82  & 1.384e-01 & 0.80  \\
                                                                   & 65536    & 196608  & 4.775e-01 & 0.81 & 2.237e-02 & 0.99  & 1.053e-01 & 0.90  & 7.503e-02 & 0.88  \\ \hline
            1                                                      & 16       & 192     & 2.218e+00 & -    & 9.313e-02 & -     & 5.659e-01 & -     & 1.706e-01 & -     \\
                                                                   & 64       & 768     & 1.107e+00 & 1.00 & 4.298e-02 & 1.12  & 2.630e-01 & 1.11  & 8.488e-02 & 1.01  \\
                                                                   & 256      & 3072    & 3.637e-01 & 1.61 & 1.403e-02 & 1.62  & 1.163e-01 & 1.18  & 1.860e-02 & 2.19  \\
                                                                   & 1024     & 12288   & 9.950e-02 & 1.87 & 3.842e-03 & 1.87  & 3.887e-02 & 1.58  & 3.460e-03 & 2.43  \\
                                                                   & 4096     & 49152   & 2.557e-02 & 1.96 & 9.880e-04 & 1.96  & 1.120e-02 & 1.80  & 5.357e-04 & 2.69  \\
                                                                   & 16384    & 196608  & 6.448e-03 & 1.99 & 2.493e-04 & 1.99  & 2.999e-03 & 1.90  & 7.450e-05 & 2.85  \\
                                                                   & 65536    & 786432  & 1.617e-03 & 2.00 & 6.252e-05 & 2.00  & 7.756e-04 & 1.95  & 9.821e-06 & 2.92  \\ \hline
            2                                                      & 16       & 432     & 1.419e+00 & -    & 5.574e-02 & -     & 2.518e-01 & -     & 7.054e-02 & -     \\
                                                                   & 64       & 1728    & 3.199e-01 & 2.15 & 1.234e-02 & 2.18  & 5.277e-02 & 2.25  & 9.511e-03 & 2.89  \\
                                                                   & 256      & 6912    & 4.996e-02 & 2.68 & 1.929e-03 & 2.68  & 8.822e-03 & 2.58  & 5.021e-04 & 4.24  \\
                                                                   & 1024     & 27648   & 6.630e-03 & 2.91 & 2.562e-04 & 2.91  & 1.239e-03 & 2.83  & 2.195e-05 & 4.52  \\
                                                                   & 4096     & 110592  & 8.354e-04 & 2.99 & 3.229e-05 & 2.99  & 1.765e-04 & 2.81  & 8.334e-07 & 4.72  \\
                                                                   & 16384    & 442368  & 1.042e-04 & 3.00 & 4.028e-06 & 3.00  & 2.395e-05 & 2.88  & 2.874e-08 & 4.86  \\
                                                                   & 65536    & 1769472 & 1.299e-05 & 3.00 & 5.022e-07 & 3.00  & 3.119e-06 & 2.94  & 9.433e-10 & 4.93  \\ \hline
        \end{tabular}
        }
    \end{center}
    \caption{Errors and observed convergence rates between the fluxes $\bj$ and $\bJ$ in $L^2(\Omega)$ norm and between the densities $u$ and $U$ in both $L^2(\Omega)$ and $L^\infty(\Omega)$ norms using bi-polynomials $\cQ_k$ with $k=0,1,2$ under a sequence of uniform quadrilateral subdivisions. Optimal rates of convergences are observed. The last column shows the error decay for the cell-wise average of $U$, \ie $\text{avg}:=\max_{K\in\cT} |(u-U,1)_K|$.}
    \label{table:uniform-convergence}
\end{table}

\subsubsection{Convergence for the local postprocessing}
We next examine the convergence of the local postprocessing $U_*$ according to \eqref{eq:l2-minimization}, i.e. reconstruction by the local $L^2$ minimization. Table~\ref{table:l2-minimization} reports the error decays for $U_*$ in both $L^2(\Omega)$ and $L^\infty(\Omega)$ norms. Both errors decay with rate $O(h^{k+2})$ for $k\ge 1$.
\begin{table}[hbt!]
    \begin{center}
        \begin{tabular}{|r|r|r|c|c|c|c|c|c|c|c|c|c|c|c|} \hline
            deg                                             & \# cells & \# dofs &
            \multicolumn{2}{|c|}{$\|u-U_*\|_{L^2(\Omega)}$} &
            \multicolumn{2}{|c|}{$\|u-U_*\|_{L^\infty(\Omega)}$}                                                       \\ \hline
                                                            &          &         & error     & rate & error     & rate \\\hline
            1                                               & 16       & 192     & 7.868e-02 & -    & 5.873e-01 & -    \\
                                                            & 64       & 768     & 2.309e-02 & 1.77 & 2.566e-01 & 1.19 \\
                                                            & 256      & 3072    & 4.557e-03 & 2.34 & 6.750e-02 & 1.93 \\
                                                            & 1024     & 12288   & 7.049e-04 & 2.69 & 1.255e-02 & 2.43 \\
                                                            & 4096     & 49152   & 9.673e-05 & 2.87 & 2.092e-03 & 2.58 \\
                                                            & 16384    & 196608  & 1.260e-05 & 2.94 & 3.007e-04 & 2.80 \\
                                                            & 65536    & 786432  & 1.606e-06 & 2.97 & 4.035e-05 & 2.90 \\ \hline
            2                                               & 16       & 432     & 3.211e-02 & -    & 2.485e-01 & -    \\
                                                            & 64       & 1728    & 3.985e-03 & 3.01 & 5.269e-02 & 2.24 \\
                                                            & 256      & 6912    & 3.647e-04 & 3.45 & 7.371e-03 & 2.84 \\
                                                            & 1024     & 27648   & 2.624e-05 & 3.80 & 5.929e-04 & 3.64 \\
                                                            & 4096     & 110592  & 1.712e-06 & 3.94 & 3.964e-05 & 3.90 \\
                                                            & 16384    & 442368  & 1.085e-07 & 3.98 & 2.511e-06 & 3.98 \\
                                                            & 65536    & 1769472 & 6.813e-09 & 3.99 & 1.574e-07 & 4.00 \\ \hline
        \end{tabular}
    \end{center}
    \caption{Error between the postprocessed solution $U_*$ ($L^2$-minimization) defined by \eqref{eq:l2-minimization} and the exact solution $u$ in both $L^2(\Omega)$ and $L^\infty(\Omega)$ norms using bi-polynomials with degree $1$ and $2$. Optimal rates of convergence are observed.}
    \label{table:l2-minimization}
\end{table}

Table~\ref{table:pde-reconstruction} provides the convergence results for the
flux reconstruction $\bJ_{div}$ defined by \eqref{eq:rtn-proj} as well as the
local postprocessing $U^*$ based on $\bJ_{div}$; see
\eqref{eq:local-postprocessing}. We confirm numerically that both $\bJ_{div}$
and $\DIV\bJ_{div}$ converge with rate $O(h^{k+1})$. Such results, together with
the superconvergence of the cell-wise average of $U$ (see the last column of
Table~\ref{table:uniform-convergence}), guarantee that $\|u-U^*\|_{L^2(\Omega)}$
decays with rate $O(h^{k+2})$ and that the errors are greater than
$\|u-U_*\|_{L^2(\Omega)}$. We also observe optimal convergence of $U^*$ in
$L^\infty(\Omega)$, with an error larger than the $L^\infty(\Omega)$ error for
$U_*$.

\begin{table}[hbt!]
    \begin{center}
    \resizebox{\textwidth}{!}{
        \begin{tabular}{|r|r|r|c|c|c|c|c|c|c|c|c|c|c|c|} \hline
            deg                                                               & \# cells & \# DoFs &
            \multicolumn{2}{|c|}{$\|\bj - \bJ_{div}\|_{L^2(\Omega)}$}         &
            \multicolumn{2}{|c|}{$\|\DIV\bj - \DIV\bJ_{div}\|_{L^2(\Omega)}$} &
            \multicolumn{2}{|c|}{$\|u-U^*\|_{L^2(\Omega)}$}                   &
            \multicolumn{2}{|c|}{$\|u-U^*\|_{L^\infty(\Omega)}$}                                                                                                                  \\ \hline
                                                                              &          &         & error     & rate & error      & rate & error      & rate & error      & rate \\ \hline
            1                                                                 & 16       & 192     & 2.259e+00 & -    & 4.4415e+00 & -    & 8.8866e-02 & -    & 6.1846e-01 & -    \\
                                                                              & 64       & 768     & 1.108e+00 & 1.03 & 2.2154e+00 & 1.00 & 3.0418e-02 & 1.55 & 3.8536e-01 & 0.68 \\
                                                                              & 256      & 3072    & 3.635e-01 & 1.61 & 6.9941e-01 & 1.66 & 6.2003e-03 & 2.29 & 1.6328e-01 & 1.24 \\
                                                                              & 1024     & 12288   & 9.942e-02 & 1.87 & 1.8539e-01 & 1.92 & 9.2468e-04 & 2.75 & 3.3066e-02 & 2.30 \\
                                                                              & 4096     & 49152   & 2.555e-02 & 1.96 & 4.6504e-02 & 2.00 & 1.2232e-04 & 2.92 & 5.0770e-03 & 2.70 \\
                                                                              & 16384    & 196608  & 6.443e-03 & 1.99 & 1.1557e-02 & 2.01 & 1.5584e-05 & 2.97 & 6.9794e-04 & 2.86 \\
                                                                              & 65536    & 786432  & 1.616e-03 & 2.00 & 2.8750e-03 & 2.01 & 1.9622e-06 & 2.99 & 9.1424e-05 & 2.93 \\ \hline
            2                                                                 & 16       & 432     & 1.419e+00 & -    & 3.5109e+00 & -    & 4.6873e-02 & -    & 3.9884e-01 & -    \\
                                                                              & 64       & 1728    & 3.197e-01 & 2.15 & 8.2793e-01 & 2.08 & 7.1029e-03 & 2.72 & 2.3068e-01 & 0.79 \\
                                                                              & 256      & 6912    & 4.992e-02 & 2.68 & 1.3070e-01 & 2.66 & 5.9918e-04 & 3.57 & 2.7253e-02 & 3.08 \\
                                                                              & 1024     & 27648   & 6.624e-03 & 2.91 & 1.7086e-02 & 2.94 & 4.0934e-05 & 3.87 & 2.1080e-03 & 3.69 \\
                                                                              & 4096     & 110592  & 8.347e-04 & 2.99 & 2.1204e-03 & 3.01 & 2.6252e-06 & 3.96 & 1.4343e-04 & 3.88 \\
                                                                              & 16384    & 442368  & 1.041e-04 & 3.00 & 2.6186e-04 & 3.02 & 1.6532e-07 & 3.99 & 9.3157e-06 & 3.94 \\
                                                                              & 65536    & 1769472 & 1.298e-05 & 3.00 & 3.2583e-05 & 3.01 & 1.1081e-08 & 3.90 & 5.9344e-07 & 3.97 \\ \hline
        \end{tabular}
        }
    \end{center}
    \caption{$L^2(\Omega)$-error decay for the flux reconstruction $\bJ_{div}$ (defined by \eqref{eq:rtn-proj}) as well as its divergence. The $L^2(\Omega)$ and $L^\infty(\Omega)$ errors for the postprocessing $U^*$ defined by \eqref{eq:local-postprocessing} (based on flux reconstruction) are report in this table as well.}
    \label{table:pde-reconstruction}
\end{table}

Figure~\ref{fig:sol-compare} reports the approximate solution $U$, its
postprocessing $U_*$ and $U^*$ (top row), and the corresponding error plots on
the uniform grid $\mathcal T_6$ with 16384 cells (bottom row). By examining the
bottom row of the picture, we observe that the error of $U$ is largest near the
two boundary layers but not on the top right corner of the domain. On the other
hand, both $U_*$ and $U^*$ have large errors around the top-right corner.
Meanwhile, cells near the two boundary layers contain both positive and negative
errors, which behave differently compared to the error of $U$.

\begin{figure}[hbt!]
    \begin{center}
        \begin{tabular}{ccc}
            \includegraphics[width=.32\textwidth]{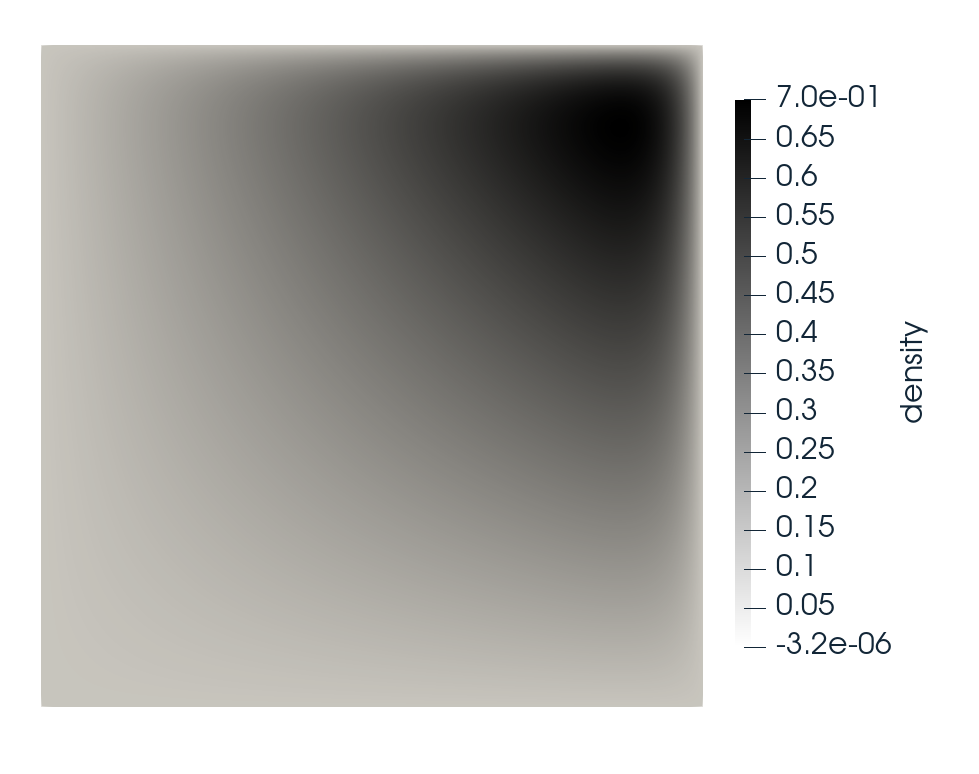}     & \includegraphics[width=.32\textwidth]{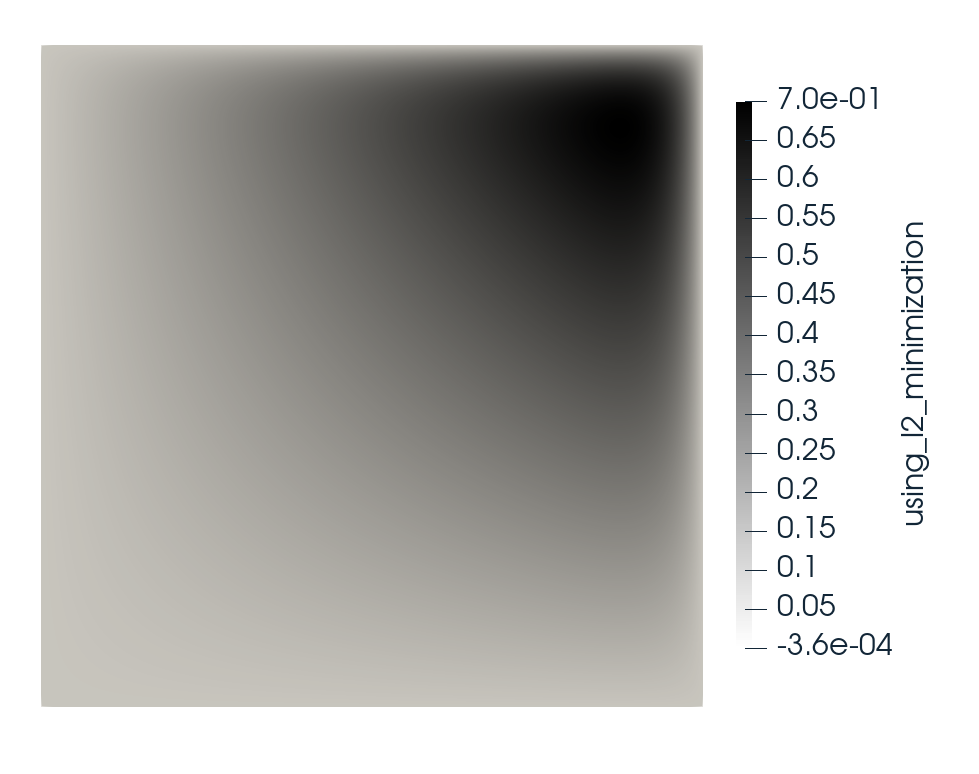}    &
            \includegraphics[width=.32\textwidth]{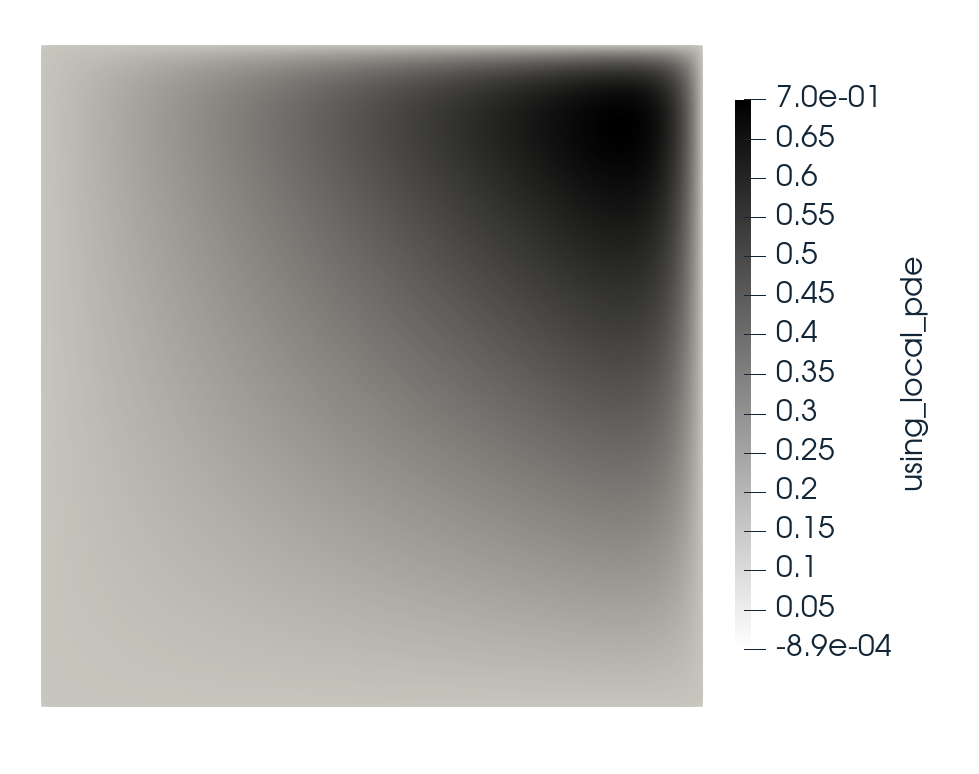}                                                                                                                                   \\
            \includegraphics[width=.32\textwidth]{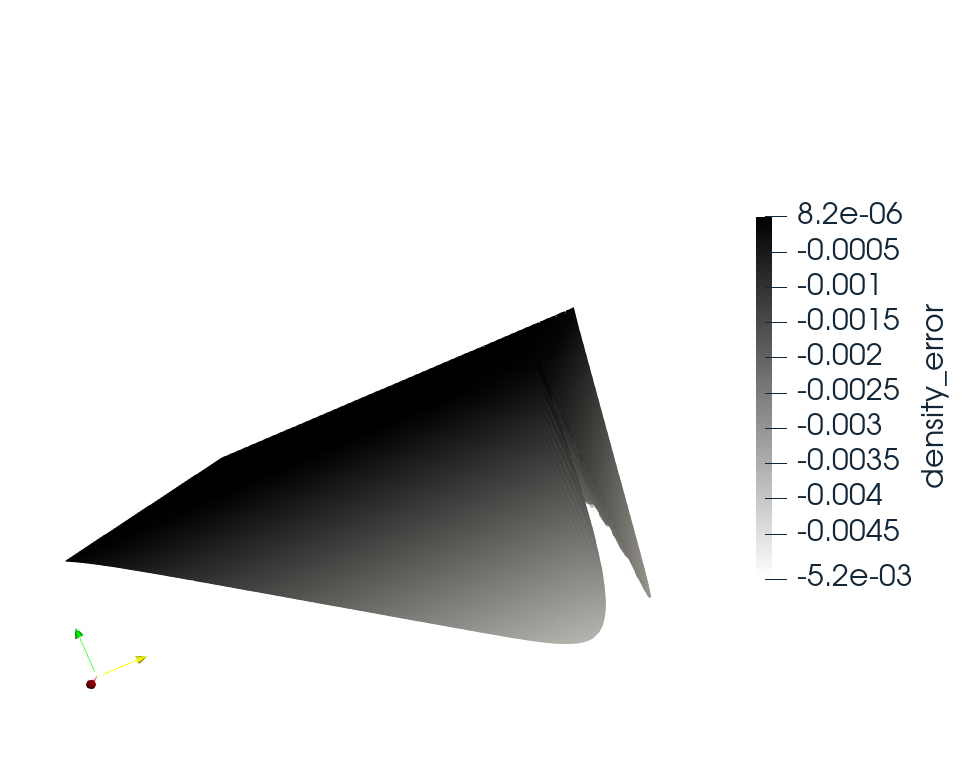} & \includegraphics[width=.32\textwidth]{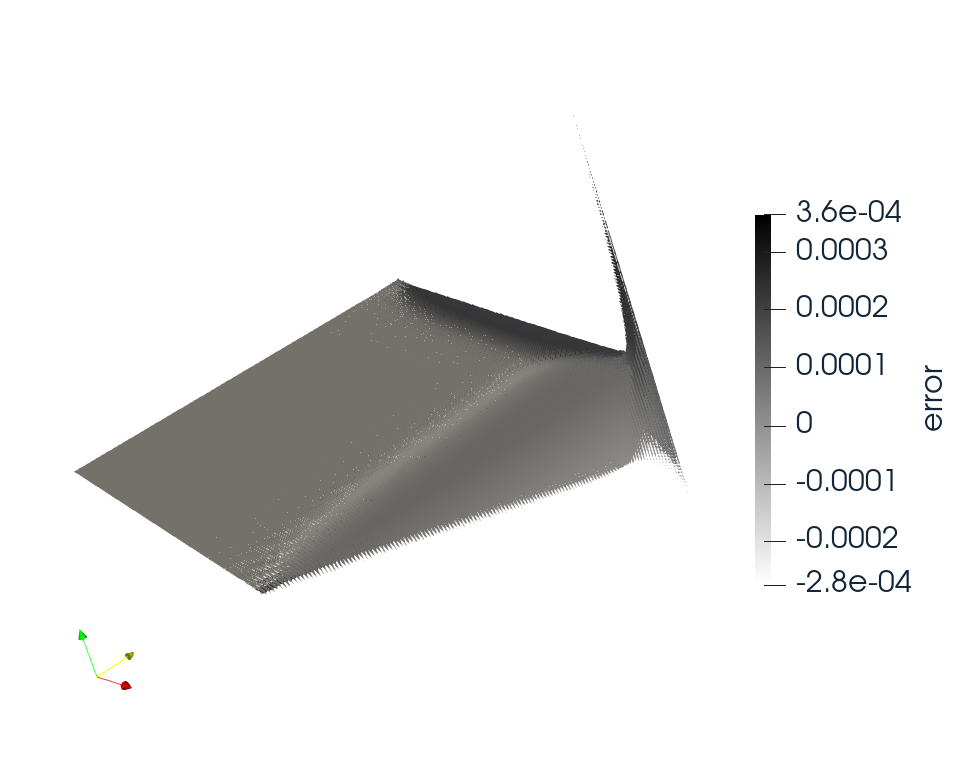} & \includegraphics[width=.32\textwidth]{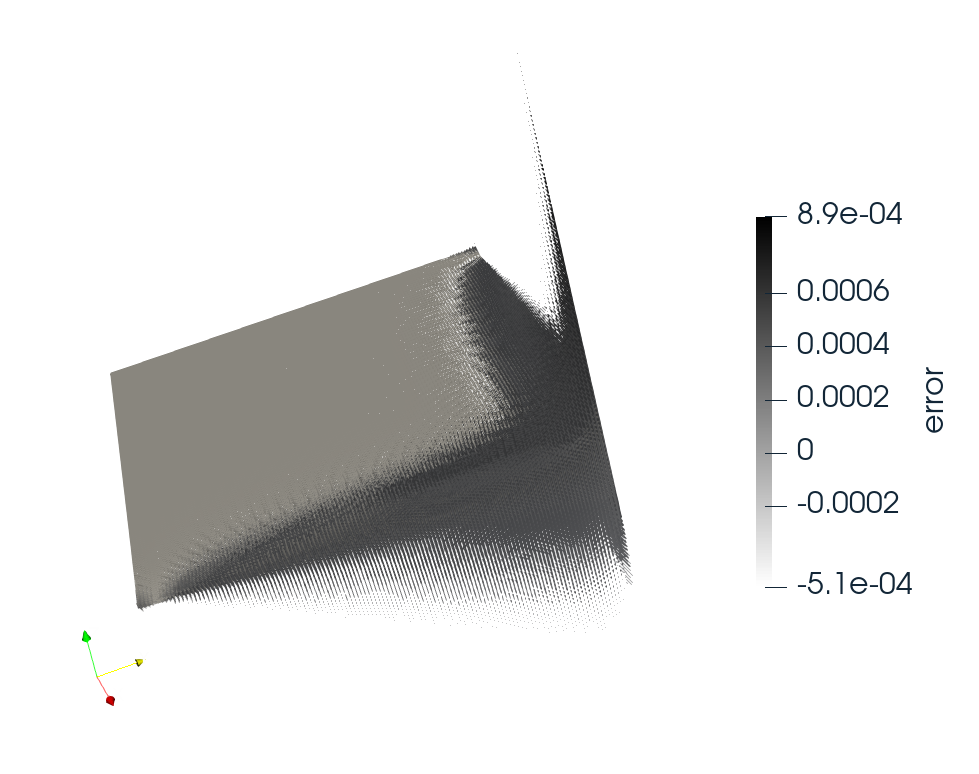}
        \end{tabular}
    \end{center}
    \caption{From left to right: approximate solutions $U$, $U_*$ and $U^*$ (top) defined on the grid $\cT_6$ using bi-linear elements and the corresponding error plots (bottom). The range of the plots are $[-5.2e-3, 8.2e-6]$, $[-2.8e-4,3.6e-4]$ and $[-5.1e-4,8.9e-4]$. Here we rescaled the magnitudes with factors $100$, $1500$, and $1000$ respectively.}
    \label{fig:sol-compare}
\end{figure}

\subsection{A one dimensional benchmark test} \label{ssec:vanroosbroek}

\NewDocumentCommand{\VRGrid}{O{j}}{\ensuremath{\mathcal{T}_{#1}}}

In section~\ref{ssec:limiting-case}, we have pointed out that the
Scharfetter--Gummel scheme \eqref{eq:sg} can be obtained as a certain limit of our proposed method.
Here we perform a comparison test using these two numerical approaches as well as
the standard HDG scheme to solve the stationary van Roosbroeck model in
a one-dimensional domain $\Omega = [0, \ell]$.

\subsubsection{The van Roosbroeck system}
The van Roosbroeck system is one of the most common models used to describe currents of
electric charge carriers inside a semiconductor device. It consists of three
nonlinear differential equations for the unknown electrostatic potential $\psi(x)$, the
density of electrons $n$ and the density of holes $p$. A complete description of
this model can be found in \cite{Markowich}. In the following numerical test, we compute
the so-called thermodynamic equilibrium, a
physical state defined by vanishing currents. In equilibrium, it is possible to decouple
the above mentioned three equations, i.e., solving a nonlinear Poisson equation to obtain
$\psi(x)$ and using this function to compute the other two unknown densities $(n,p)$.

Here the one dimensional Poisson equation for $\psi(x)$ is given by
\begin{equation} \label{eq:theq-psi}
  -\frac{\mathrm{d}}{\mathrm{d}x}\left(\varepsilon_s \frac{\mathrm{d}}{\mathrm{d}x} \psi \right) =
      q \left( N_v \exp\left(\frac{E_v - q \psi}{k_B T}\right)
          - N_c \exp\left(\frac{q \psi - E_c}{k_B T}\right) + C
      \right),
\end{equation}
where $q$ is the elementary charge, $k_B$ is the Boltzmann constant, $T$
represents the temperature of the device (which we will suppose to be constant on all
$\Omega$), and $\varepsilon_s$, $N_v$, $E_v$, $N_c$, $E_c$, and $C$ are
constants or piecewise constant functions that describe some physical properties of the
material; we refer to  Table~\ref{tab:const-values} for all these values.
Dirichlet  boundary conditions for \eqref{eq:theq-psi} can be obtained under the assumption
that the right hand side datum vanishes at the boundary (cf. \cite[Section~2.3]{Markowich}).

Given the electrostatic potential $\psi$, we shall solve for the densities $n$
and $p$ governed by the van Roosbroeck system. Here we can solve them separately
in thermal equilibrium and we only focus on the hole density. For the hole
density, there holds
\begin{equation} \label{eq:theq-p}
  \begin{cases}
    \displaystyle -q \frac{\mathrm{d}}{\mathrm{d}x}\left(
        \mu_p\left(U_T \frac{\mathrm{d}p}{\mathrm{d}x} +
           p\frac{\mathrm{d}\psi}{\mathrm{d}x}\right)\right) = 0 , \\[12pt]
    \displaystyle p(x_0) = N_v \exp\left(\frac{E_v - q \psi\left(x_0\right)}{k_BT}\right) &
      \text{for } x_0 \in \{0, \ell\} .
  \end{cases}
\end{equation}
where $U_T := \tfrac{k_BT}{q}$ and the mobility constant $\mu_p$ defines how easily the
holes move through the device.
Comparing the above equation with \eqref{eq:problem}, we have
\[
  \alpha := U_T, \qquad \qquad
  \beta := -\frac{\mathrm{d}\psi}{\mathrm{d}x}, \qquad \qquad
  f := 0.
\]

\subsubsection{Numerical settings}
We will test our numerical method for \eqref{eq:theq-p} using the electrostatic
potential given by \eqref{eq:theq-psi}. Here we consider our physical domain as
a p-i-n device, which contains $p$ and an $n$ doped regions as well as an
intrinsic layer in between. The physical meaning of these terms is for example explained in \cite{Farrell2017c,Farrell2017}.  The doping concentration $C$ in \eqref{eq:theq-psi}
is assumed to be a piecewise constant function
\begin{equation}
 C = \begin{cases}
      N_D, & \text{if } 0 \leq x < \tfrac{\ell}{3}, \\
      \, 0, & \text{if } \tfrac{\ell}{3} \leq x < \tfrac{2 \ell}{3}, \\
      - N_A, & \text{if } x \geq \tfrac{2 \ell}{3},
     \end{cases}
\end{equation}
where $N_D$ and $N_A$ are positive constants defined in
Table~\ref{tab:const-values} and $\ell>0$ is taken to be a third of the domain
length. A graphical representation of the solutions to this problem is shown in
Figure~\ref{fig:van-roosbroeck-solution}.

\begin{figure}[hbt]
 \centering

 \pgfplotstableread{data/van_roosbroeck_solution.dat}{\vrsolution}
 \begin{tikzpicture}
  \pgfplotsset{
    y axis style/.style={
      yticklabel style=#1,
      ylabel style=#1,
      y axis line style=#1,
      ytick style=#1
    }
  }
  \begin{axis}[%
      width=.35\textwidth,
      height=.25\textwidth,
      axis x line=bottom,
      axis y line=left,
      y axis style=blue,
      xmin=0, xmax=6,
      ymin=0, ymax=1.5,
      ylabel = {$\psi$}, y unit={V},%y SI prefix=centi,
      xlabel = {$x$}, x SI prefix=micro, x unit={m}
    ]
    \addplot[blue, dashed, thick] table[x=x, y=V]{\vrsolution};
  \end{axis}
    \begin{axis}[%
      width=.35\textwidth,
      height=.25\textwidth,
      axis x line=none,
      axis y line*=right,
      xmin=0, xmax=6,
      y axis style=red!75!black,
      ylabel = {$\beta$}, y unit={V/m},
    ]
    \addplot[red!75!black, thick] table[x=x, y=E]{\vrsolution};
  \end{axis}
 \end{tikzpicture}
 \hspace{45pt}
 \begin{tikzpicture}
  \begin{axis}[%
      width=.35\textwidth,
      height=.25\textwidth,
      axis x line=bottom,
      axis y line=left,
      xmin=0, xmax=6,
      ymin=1e-2,
      ymode=log,
      ylabel = {$p$}, y unit={m^{-3}},%y SI prefix=centi,
      xlabel = {$x$}, x SI prefix=micro, x unit={m}
    ]
    \addplot[black, thick] table[x=x, y=p]{\vrsolution};
  \end{axis}
 \end{tikzpicture}
 \caption{%
   (Left) electrostatic potential $\psi$ (dashed blue line) for
   \eqref{eq:theq-psi} and the electric field $\beta=-\diff \psi/\diff x$ (solid
   red line) for a model that uses the constants described in
   Table~\ref{tab:const-values}. (Right) the hole concentration for \eqref{eq:theq-p}.
   Here we obtain the results using linear elements on a highly refined grid
   with 18432 cells and 92161 degrees of freedom (counting the degrees of
   freedom for the solution, its gradient and the solution on the trace). }
 \label{fig:van-roosbroeck-solution}
\end{figure}

We construct a sequence of hierarchical subdivision $\VRGrid[i]$ of  
$\Omega$ for
$i$ in $\{1, \ldots, 7\}$. The coarsest grid $\VRGrid[1]$ is a refinement of the
uniform partition of $\Omega$ with intervals $I_j:=(\tfrac{(j-1)\ell}{6},
\tfrac{j\ell}{6})$ for $j=1,\ldots,6$, such that the two junctions $\{\tfrac\ell3,
\tfrac{2\ell}3\}$ are grid points of $\VRGrid[i]$. Then we build $\VRGrid[1]$ as
follows:
\begin{enumerate}
    \item We set $I_1, I_6\in \VRGrid[1]$.
    \item Bisect the rest of the intervals with the following graded strategy:
    for each $I_j$ with $j=2,3,4,5$, we consider an affine transformation
    $\mathcal F_j : I_j \to (0,1)$ such that the junction point is at $0$. Then
    we subdivide $(0,1)$ uniformly with four intervals and rescale them by
    taking the square of the position, i.e., for $i=1$ the partition is given by
    $\mathcal P
    :=\{(\tfrac{(k-1)^2}{2^{i+2}},\tfrac{k^2}{2^{i+2}})\}_{k=1}^{2^{i+1}}$.
    \item $\VRGrid[1]$ consists of $I_1$, $I_6$, and $\mathcal F^{-1}_j(\mathcal P)$ for $j=2,3,4,5$.
\end{enumerate}
To generate $\VRGrid[i]$ for $i>1$, we refine globally  $\{I_1, I_6\}$ with $i$ times,
and refine the other $I_j$ intervals following Step~2. Examples of the grids in
$\{\VRGrid[i]\}_{i=1}^3$ are shown in Figure~\ref{fig:triangulation}.

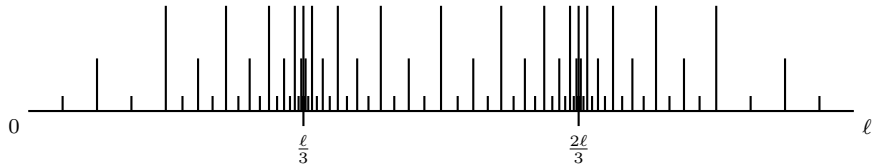
\begin{figure}[hbt]
  \centering
  \tikzsetnextfilename{van_roosbroeck_grids}
  \begin{tikzpicture}
    \def\domainlength{6}
    \def\dsize{.9\textwidth}
    \def\coordtick{0.2}
    \def\levonetick{1.4}
    \def\levtwotick{0.7}
    \def\levthreetick{0.2}

    \draw[thick] (0,0) node[anchor=north east]{0} -- (\dsize, 0) node[anchor=north west] {$\ell$};
    \draw[black, thick] (\dsize/3, 0)--(\dsize/3, -\coordtick) node[anchor=north]{$\frac{\ell}{3}$};
    \draw[black, thick](2*\dsize/3,0)--(2*\dsize/3,-\coordtick)node[anchor=north]{$\frac{2\ell}{3}$};

    \foreach \x in {1.0, 1.4375, 1.75, 1.9375, 2.0, 2.0625, 2.25, 2.5625, 3.0, 3.4375, 3.75,%
                    3.9375, 4.0, 4.0625, 4.25, 4.5625, 5.0} {
      \def\xp{\x / \domainlength * \dsize}
      \draw[black, thick] (\xp, 0) -- (\xp, \levonetick);
    }

    \foreach \x in {0.5, 1.234375, 1.609375, 1.859375, 1.984375, 2.015625, 2.140625, 2.390625, %
                    2.765625, 3.234375, 3.609375, 3.859375, 3.984375, 4.015625, 4.140625, 4.390625, %
                    4.765625, 5.5}
    {
      \def\xp{\x / \domainlength * \dsize}
      \draw[black, thick] (\xp, 0) -- (\xp, \levtwotick);
    }

    \foreach \x in {0.25, 0.75, 1.12109375, 1.33984375, 1.52734375, 1.68359375, 1.80859375, %
                    1.90234375, 1.96484375, 1.99609375, 2.00390625, 2.03515625, 2.09765625, %
                    2.19140625, 2.31640625, 2.47265625, 2.66015625, 2.87890625, 3.12109375, %
                    3.33984375, 3.52734375, 3.68359375, 3.80859375, 3.90234375, 3.96484375, %
                    3.99609375, 4.00390625, 4.03515625, 4.09765625, 4.19140625, 4.31640625, %
                    4.47265625, 4.66015625, 4.87890625, 5.25, 5.75}
    {
      \def\xp{\x / \domainlength * \dsize}
      \draw[black, thick] (\xp, 0) -- (\xp, \levthreetick);
    }

    \end{tikzpicture}
  \caption{%
    The triangulation $\mathcal T_j$ over $\Omega$ with 18, 36 and 72 cells.
  }
  \label{fig:triangulation}
\end{figure}

We approximate the solutions to \eqref{eq:theq-p} on $\VRGrid[j]$ with three
different methods: the Scharfetter--Gummel scheme \eqref{eq:sg}, the standard
HDG method (H-LDG) \cite{cockburn2009unified} and our proposed one in
\eqref{eq:discrete-full}. Denote the approximation of the hole density with
$P_j$. We obtain a numerical reference solution $P_r$ by solving the problem
with the standard HDG method on a grid $\mathcal T_r$ which is globally refined
from $\mathcal T_7$ four times. The coefficient $\beta =
-\frac{\mathrm{d}\psi}{\mathrm{d}x}$ is generated by solving the Poisson problem
\eqref{eq:theq-psi} on $\mathcal T_r$ with the standard HDG and project (in the
$L^2$ sense) the resulting $\beta$ onto $\bcQ(\mathcal T)$ using piecewise constant functions.

\begin{figure}
  \centering
  \def\pwidth{.95\textwidth}
  \def\pheight{.75\textwidth}
  \begin{subfigure}[b]{0.48\textwidth}
    \centering
    \tikzsetnextfilename{van_roosbroeck_comparison-no-postproc}
    \begin{tikzpicture}
      \pgfplotstableread{%
          data/van_roosbroeck_method_comparison_beta0.dat%
      }{\methodsbetazero}
      \begin{axis}[%
          width=\pwidth,
          height=\pheight,
          axis x line=bottom,
          axis y line=left,
          xmode=log,
          ymode=log,
          xmin=16, xmax=1200,
          ymin=1e-5,
          xtick=data,
          xticklabel={%
              \pgfkeys{/pgf/number format/precision={0}}
              \pgfmathparse{exp(\tick)}
              \pgfmathprintnumber{\pgfmathresult}
          },
          ylabel = {}, % y unit={m^{-3}}, y SI prefix=centi,
          xlabel = {N. of cells},
          legend pos=south west,
          legend style={font=\tiny}
        ]

        \addplot[black, mark=*] table[x=x, y=FVM]{\methodsbetazero};
        \addlegendentry{FVM}
        \addplot[blue, mark=diamond*] table[x=x, y=HDG_deg0]{\methodsbetazero};
        \addlegendentry{HDG}
        \addplot[orange, mark=square*] table[x=x, y=WHDG_deg0]{\methodsbetazero};
        \addlegendentry{WHDG}

        \pgfplotstablegetelem{0}{x}\of\methodsbetazero
        \let\xs=\pgfplotsretval
        \pgfplotstablegetelem{0}{FVM}\of\methodsbetazero
        \let\ys=\pgfplotsretval
        \pgfkeys{/pgf/fpu=true}
        \pgfmathsetmacro{\rone}{\xs * \ys}
        \pgfmathsetmacro{\rtwo}{\xs * \xs * \ys}
        \pgfkeys{/pgf/fpu=false}

        \edef\draworder{%
            \noexpand\addplot[domain=18:1152, samples=2, gray, dashed] {\rone / x};
            \noexpand\addlegendentry{Slope=-1}
            \noexpand\addplot[domain=18:1152, samples=2, gray, dotted, thick] {\rtwo / x^2};
            \noexpand\addlegendentry{Slope=-2}
        }
        \draworder
      \end{axis}
    \end{tikzpicture}
  \end{subfigure}
  \hfill
  \begin{subfigure}[b]{0.48\textwidth}
    \centering
    \tikzsetnextfilename{van_roosbroeck_comparison-no-postproc-solution}
    \begin{tikzpicture}
      \pgfkeys{/pgf/fpu=true}
      \pgfplotstableread{%
          data/van_roosbroeck_solutions_0144.dat%
      }{\solutions}
      \pgfkeys{/pgf/fpu=false}

      \pgfplotstablegetrowsof{\solutions}
      \pgfmathtruncatemacro{\NCells}{\pgfplotsretval}
      \pgfmathtruncatemacro{\LastCell}{\NCells - 1}

      \pgfplotstablegetelem{0}{FVM_x_start}\of\solutions
      \let\domainStart=\pgfplotsretval

      \pgfplotstablegetelem{\LastCell}{FVM_x_end}\of\solutions
      \let\domainEnd=\pgfplotsretval

      \begin{axis}[%
          width=\pwidth,
          height=\pheight,
          axis x line=bottom,
          axis y line=left,
          xmin=\domainStart,
          xmax=\domainEnd,
          ymin=1e-2,
          ymax=1e25,
          ymode=log,
          ylabel = {$p$}, y unit={m^{-3}}, %y SI prefix=centi,
          xlabel = {$x$}, x SI prefix=micro, x unit={m},
        ]

        \NewDocumentCommand{\DrawPiecewiseFunction}{O{black}m}{%
          \foreach \i in {0, ..., \LastCell}%
          {
            \pgfplotstablegetelem{\i}{#2_x_start}\of\solutions
            \let\xs=\pgfplotsretval
            \pgfplotstablegetelem{\i}{#2_x_end}\of\solutions
            \let\xe=\pgfplotsretval
            \pgfplotstablegetelem{\i}{#2_y_start}\of\solutions
            \let\ys=\pgfplotsretval
            \pgfplotstablegetelem{\i}{#2_y_end}\of\solutions
            \let\ye=\pgfplotsretval
            \edef\drawcell{
              \noexpand\draw [#1, thick] (\xs, \ys) -- (\xe, \ye);
            }
            \drawcell
          }
        }
        \DrawPiecewiseFunction{FVM}
        \DrawPiecewiseFunction[blue]{HDG_deg0}
        \DrawPiecewiseFunction[orange]{WHDG_deg0}
      \end{axis}
    \end{tikzpicture}
  \end{subfigure}
  \begin{subfigure}[b]{0.48\textwidth}
    \centering
    \tikzsetnextfilename{van_roosbroeck_comparison-with-postproc}
    \begin{tikzpicture}
      \pgfplotstableread{%
          data/van_roosbroeck_method_comparison_beta0.dat%
      }{\methodsbetazero}

      \begin{axis}[%
        width=\pwidth,
        height=\pheight,
        axis x line=bottom,
        axis y line=left,
        xmode=log,
        ymode=log,
        xmin=16, xmax=1200,
        ymin=1e-5,
        xtick=data,
        xticklabel={%
            \pgfkeys{/pgf/number format/precision={0}}
            \pgfmathparse{exp(\tick)}
            \pgfmathprintnumber{\pgfmathresult}
        },
        ylabel = {}, % y unit={m^{-3}}, y SI prefix=centi,
        xlabel = {N. of cells},
        legend pos=south west,
        legend style={font=\tiny}
      ]

        \addplot[black, mark=*] table[x=x, y=FVMp]{\methodsbetazero};
        \addlegendentry{FVM}

        \addplot[blue, mark=diamond*] table[x=x, y=HDGpt]{\methodsbetazero};
        \addlegendentry{HDG}

        \addplot[orange, mark=square*] table[x=x, y=WHDGpt]{\methodsbetazero};
        \addlegendentry{WHDG}
        
        % \addplot[green, mark=square*] table[x=x, y=HDGpt]{\methodsbetazero};
        % \addlegendentry{HDGpt}

        % \addplot[red, mark=square*] table[x=x, y=WHDGpt]{\methodsbetazero};
        % \addlegendentry{WHDGpt}

        % Draw the lines of the order of convergence
        \pgfplotstablegetelem{0}{x}\of\methodsbetazero
        \let\xs=\pgfplotsretval
        \pgfplotstablegetelem{0}{FVM}\of\methodsbetazero
        \let\ys=\pgfplotsretval
        \pgfkeys{/pgf/fpu=true}
        \pgfmathsetmacro{\rone}{\xs * \ys}
        \pgfmathsetmacro{\rtwo}{\xs * \xs * \ys}
        \pgfkeys{/pgf/fpu=false}

        \edef\draworder{%
            \noexpand\addplot[domain=18:1152, samples=2, gray, dashed] {\rone / x};
            \noexpand\addlegendentry{Slope=-1}
            \noexpand\addplot[domain=18:1152, samples=2, gray, dotted, thick] {\rtwo / x^2};
            \noexpand\addlegendentry{Slope=-2}
        }
        \draworder

      \end{axis}
    \end{tikzpicture}
  \end{subfigure}
  \hfill
  \begin{subfigure}[b]{0.48\textwidth}
    \centering
    \tikzsetnextfilename{van_roosbroeck_comparison-with-postproc-solution}
    \begin{tikzpicture}
      \pgfkeys{/pgf/fpu=true}
      \pgfplotstableread{%
          data/van_roosbroeck_solutions_0072.dat%
      }{\solutions}
      \pgfkeys{/pgf/fpu=false}

      \pgfplotstablegetrowsof{\solutions}
      \pgfmathtruncatemacro{\NCells}{\pgfplotsretval}
      \pgfmathtruncatemacro{\LastCell}{\NCells - 1}

      \pgfplotstablegetelem{0}{FVM_x_start}\of\solutions
      \let\domainStart=\pgfplotsretval

      \pgfplotstablegetelem{\LastCell}{FVM_x_end}\of\solutions
      \let\domainEnd=\pgfplotsretval

      \begin{axis}[%
          width=\pwidth,
          height=\pheight,
          axis x line=bottom,
          axis y line=left,
          xmin=\domainStart,
          xmax=\domainEnd,
          ymin=1e-2,
          ymax=1e25,
          ymode=log,
          ylabel = {$p$}, y unit={m^{-3}}, %y SI prefix=centi,
          xlabel = {$x$}, x SI prefix=micro, x unit={m},
        ]

        \NewDocumentCommand{\DrawPiecewiseFunction}{O{black}m}{%
          \foreach \i in {0, ..., \LastCell}%
          {
            \pgfplotstablegetelem{\i}{#2_x_start}\of\solutions
            \let\xs=\pgfplotsretval
            \pgfplotstablegetelem{\i}{#2_x_end}\of\solutions
            \let\xe=\pgfplotsretval
            \pgfplotstablegetelem{\i}{#2_y_start}\of\solutions
            \let\ys=\pgfplotsretval
            \pgfplotstablegetelem{\i}{#2_y_end}\of\solutions
            \let\ye=\pgfplotsretval

            \pgfkeys{/pgf/fpu=true}
            \pgfmathsetmacro{\m}{(\ye - \ys) / (\xe - \xs)}
            \pgfkeys{/pgf/fpu=false}

            \edef\drawcell{
              \noexpand\addplot[domain=\xs:\xe, samples=50, #1, thick]{%
                  \ys + \m * (x - \xs)
              };
            }

            \pgfkeys{/pgf/fpu=true}
            \pgfmathparse{abs(\m) > 1e-5 ? 1 : 0}
            \pgfkeys{/pgf/fpu=false}

            \pgfmathfloattofixed{\pgfmathresult}
            \pgfmathparse{\pgfmathresult > 0.5 ? int(1) : int(0)}

            \ifthenelse{\pgfmathresult=1}{%
              \drawcell
            }{%
              \edef\drawflatcell{
                \noexpand\draw [#1, thick] (\xs, \ys) -- (\xe, \ys);
              }
              \drawflatcell
            }
          }
        }

        \NewDocumentCommand{\DrawTruncatedPiecewiseFunction}{O{black}m}{%
          \foreach \i in {0, ..., \LastCell}%
          {
            \pgfplotstablegetelem{\i}{#2_x_start}\of\solutions
            \let\xs=\pgfplotsretval
            \pgfplotstablegetelem{\i}{#2_x_end}\of\solutions
            \let\xe=\pgfplotsretval
            \pgfplotstablegetelem{\i}{#2_y_start}\of\solutions
            \let\ys=\pgfplotsretval
            \pgfplotstablegetelem{\i}{#2_y_end}\of\solutions
            \let\ye=\pgfplotsretval

            \pgfkeys{/pgf/fpu=true}
            \pgfmathsetmacro{\m}{(\ye - \ys) / (\xe - \xs)}
            \pgfkeys{/pgf/fpu=false}

            \edef\drawcell{
              \noexpand\addplot[domain=\xs:\xe, samples=50, #1, thick]{%
                  max(\ys + \m * (x - \xs), 1e-3)
              };
            }

            \pgfkeys{/pgf/fpu=true}
            \pgfmathparse{abs(\m) > 1e-5 ? 1 : 0}
            \pgfkeys{/pgf/fpu=false}

            \pgfmathfloattofixed{\pgfmathresult}
            \pgfmathparse{\pgfmathresult > 0.5 ? int(1) : int(0)}

            \ifthenelse{\pgfmathresult=1}{%
              \drawcell
            }{%
              \edef\drawflatcell{
                \noexpand\draw [#1, thick] (\xs, \ys) -- (\xe, \ys);
              }
              \drawflatcell
            }
          }
        }

        \DrawPiecewiseFunction{FVMp}
        \DrawTruncatedPiecewiseFunction[blue]{HDGpt}
        \DrawPiecewiseFunction[orange]{WHDGpt}
        % \DrawPiecewiseFunction[green]{WHDGpt}
        % \DrawTruncatedPiecewiseFunction[red]{HDGpt}
      \end{axis}
    \end{tikzpicture}
  \end{subfigure}
  \caption{%
    (Top-left) $L^2(\Omega)$-errors between $P_j$ and $P_r$ against the number cells using different numerical approaches. (Top-right) Numerical approximation of $P_4$. (Bottom-left) post-processing $P_j^*$ of $P_j$ using linear elements. (Bottom-right) Post-processing results for $P_3^*$. Here we have truncated the results $P_3^*$ from the standard HDG for the values above $10^{-2}\unit{\per\cubic\meter}$.
  }
  \label{fig:van-roosbroeck-method-comparison}

\end{figure}

\subsubsection{Results}
In the top-left plot of Figure~\ref{fig:van-roosbroeck-method-comparison}, we
report the errors $\|P_j - P_r\|_{L^2(\Omega)}$ versus the numbers of degrees of
freedom, where, to maintain the comparison fair with the finite volume scheme,
we use $\mathcal P_0$ functions for the HDG and W-HDG approaches. It turns out
that all three numerical methods converge to first order, however, the numerical
solution computed using the standard HDG violates the maximum principle; see the
top-right plot for the profile of $P_4$. On the other hand, the numerical
results using \eqref{eq:discrete-full} behaves similarly to the state-of-the-art
solution obtained with Scharfetter-Gummel stabilized finite volumes. 

It is common to postprocess the solution generated by the Scharfetter-Gummel
stabilized finite volume scheme by connecting the mid-points of all cells with
straight lines, obtaining a piecewise linear and continuous solution. Therefore, in
the bottom part of Figure~\ref{fig:van-roosbroeck-method-comparison}, we perform an
analogous comparison after implementing this strategy for all three methods.
In particular, for the HDG and W-HDG scheme, this algorithm has been implemented
not by connecting the mid-points of the cells but the points on the trace obtained
from $\hat{u}$. The bottom-left logarithmic plot shows that the resulting linear
approximations, denoted by $P_j^*$, converge to $P_r$ with second order in all cases.
On the other side, the quality of the solution on the points where the charge carrier
density is low (i.e., between 0 and $\tfrac{2}{3}\ell$) is quite different between
the FVM and W-HDG method and the standard HDG; indeed, the contribution 
of these regions
on the global $L^2$ error is negligible.
On the bottom-right plot we can see the profile for $P_3^*$. The approximations from the
FVM method and the W-HDG both satisfy the discrete maximum principle; instead, the result
for the standard HDG method does not. Some values of $\hat{u}$ are negative (up
to $\num[round-mode=places,round-precision=3]{-2.28426488521920e+13} \unit{\per\cubic\meter}$);
therefore, here we only show the part with values above $10^{-2}\unit{\per\cubic\meter}$.

\begin{table}
  \centering
  \sisetup{round-mode = figures, round-precision = 5}
  \renewcommand{\arraystretch}{0.7}
  \begin{tabular}{l|c|c|c}
    Physical Quantity & Symbol & Value & Units\\
    \hline
    Device length & $\ell$ & 6.00 & \unit{\micro\meter} \\
    Reference temperature & $T$ & \num{300} & \unit{\kelvin}\\
    Absolute permittivity & $\varepsilon_s$ &
         \num{1.14219022847298e-10} & \unit{\farad\per\meter}\\
    Valence band density & $N_v$ &\num{9.139615903601645e24} & \unit{\per\cubic\meter}\\
    Valence band-edge energy & $E_v$ & \num{0} & \unit{\eV} \\
    Conduction band density & $N_c$ & \num{4.351959895879690e23} & \unit{\per\cubic\meter}\\
    Conduction band-edge energy & $E_c$ & \num{1.424} & \unit{\eV} \\
    Hole mobility & $\mu_p$ & \num{4e-2} & \unit{\meter\squared\per\volt\per\second} \\
    Acceptor doping density & $N_A$ & \num{4.204223315656757e24} & \unit{\per\cubic\meter} \\
    Donator doping density & $N_D$ & \num{4.351959895879690e23} & \unit{\per\cubic\meter} \\
    Elementary charge & $q$ & \num{1.602176634e-19} & \unit{\coulomb} \\
    Boltzmann constant & $k_B$ & \num{1.380649e-23} & \unit{\joule\per\kelvin}
  \end{tabular}
  \caption{%
      The physical constants used in Section~\ref{ssec:vanroosbroek}.
  }
  \label{tab:const-values}
\end{table}
\ignorespaces

\section{Conclusion}\label{sec:conclusion}

In this work we presented a weighted HDG method for the solution of
convection-diffusion problems. The method is based on the introduction of
weighted scalar products when defining the local HDG equations. The additional
weights have the effect of a local Slotboom change of variable: they eliminate
locally the drift term, transforming the local drift diffusion problems into
diffusion problems with an exponential coefficient.

We prove well-posedness  property of the method,
validate its numerical stability as well as convergence properties.
Furthermore, we provide a fair comparison with a
state-of-the-art finite volume discretization using the Scharfetter-Gummel
flux approximation, as well as standard HDG methods, for a convection dominated problem
with dramatic scale changes, derived from modeling a p-i-n semiconductor
junction.

W-HDG is equivalent to the Scharfetter and Gummel finite volume method for
piecewise constant polynomial approximations, and generalizes it to arbitrary
high order, while maintaining similar stability properties. An important path of
investigation would be the construction of a positivity preserving
post-processing solution for W-HDG, which is not guaranteed from the current
algorithm.

%\begin{acknowledgements}
%If you'd like to thank anyone, place your comments here
%and remove the percent signs.
%\end{acknowledgements}

% Authors must disclose all relationships or interests that 
% could have direct or potential influence or impart bias on 
% the work: 
%

\paragraph{Data Availability}
Enquiries about data availability should be directed to the authors.

\section*{Declarations}
\paragraph{Conflict of interest}

The authors declare that they have no conflict of interest.

% BibTeX users please use one of
%\bibliographystyle{spbasic}      % basic style, author-year citations
\bibliographystyle{spmpsci}      % mathematics and physical sciences
%\bibliographystyle{spphys}       % APS-like style for physics
%\bibliography{}   % name your BibTeX data base
\bibliography{references}

% Non-BibTeX users please use
% \begin{thebibliography}{}
% %
% % and use \bibitem to create references. Consult the Instructions
% % for authors for reference list style.
% %
% \bibitem{RefJ}
% % Format for Journal Reference
% Author, Article title, Journal, Volume, page numbers (year)
% % Format for books
% \bibitem{RefB}
% Author, Book title, page numbers. Publisher, place (year)
% % etc
% \end{thebibliography}

\end{document}